\newtheorem{thm}{Theorem}[section] 
\newtheorem{cor}[thm]{Corollary}
\newtheorem{lem}[thm]{Lemma}
\newtheorem{prop}[thm]{Proposition}
\theoremstyle{definition}
\newtheorem{rem}[thm]{Remark}
\newtheorem{exmpl}[thm]{Example}
\title{Recurrence relations over division algebras}
\author{Adam Chapman}
\email{adam1chapman@yahoo.com}
\address{School of Computer Science, Academic College of Tel-Aviv-Yaffo, Rabenu Yeruham St., P.O.B 8401 Yaffo, 6818211, Israel}
\author{Solomon Vishkautsan}
\email{wishcow@gmail.com}
\address{Department of Computer Science, Tel-Hai Academic College, Upper Galilee, 12208 Israel} 
\subjclass[2020]{11B37, 16K20, 17A35, 17A75, 15A18, 15B33, 12E15} 
\keywords{linear recurrence relations, division algebras, skew fields, quaternions, octonions, companion matrix, eigenvalues over division algebras}
\begin{document}

\begin{abstract}
We generalize the solution of linear recurrence relations from fields to central division algebras, adapting the standard tools of companion matrices and characteristic polynomials to the non-commutative setting. We then solve linear recurrences of order $2$ over octonion division algebras. 
\end{abstract}

\maketitle

\section{Introduction}

A linear recurrence defines a family of sequences for which each term is a fixed linear combination of preceding terms in the sequence. It is the simplest type of recurrence relation, where each term is a function of preceding terms. We can write a general linear recurrence of order $n$ (i.e., every element in the sequence is determined by the $n$ preceding terms) in the following way:
\begin{equation}\label{eq:linear-recurrence}
	a_{k+n} = -c_0a_k-c_1a_{k+1} - \cdots -c_{n-1}a_{k+n-1} ,
\end{equation}
where $k\ge 0$, $c_0\ne 0$ and the coefficients $c_0,\ldots,c_{n-1}$ belong to some field $F$ (the choice of negative signs is technical). It is clear, by induction, that given the first $n$ terms $a_0,a_1,\ldots,a_{n-1}$ of the sequence in $F$, which are called \emph{initial conditions} for the recurrence relation~\eqref{eq:linear-recurrence}, there exists a unique sequence, defined over $F$, satisfying the linear recurrence. The existence and uniqueness of a solution to \eqref{eq:linear-recurrence} are still true when the field $F$ is replaced by any ring $R$. 

A classical example of a linear recurrence is the Fibonacci sequence,
\[a_{n+2} = a_n+a_{n+1}, \quad a_0=0, a_1=1.\]
Its closed-form solution is Binet's formula
\[a_n = \frac{\varphi^n-\psi^n}{\sqrt{5}}, \quad \text{where } \varphi = \frac{1+\sqrt{5}}{2},\psi = \frac{1-\sqrt{5}}{2}.\]
This example shows that despite all elements in the sequence being rational, the closed-form expression may require us to extend the field $F$. This issue is redundant if the field $F$ is algebraically closed. 

Beyond their intrinsic algebraic interest, linear recurrence relations are ubiquitous in applied mathematics, with applications in cryptography, pseudorandom number generation and error correcting codes among others (e.g.\ \cite{niederreiter1992random, golomb1982shift, mullen2013handbook}). Recent developments also highlight the role of noncommutative algebra in applied areas, ranging from skew-polynomial based coding theory and quaternion neural networks to division-algebraic models in physics (e.g.\ \cite{gluesing2021introduction, parcollet2020,   furey2022division,furey2018three}). By extending linear recurrence relation solution methods to division algebras, we aim to provide a framework that may support both theoretical advances and future applied investigations. 

The determination of a closed-form for a linear recurrence with given initial conditions is a classical topic in mathematics, usually taught in discrete mathematics courses to first year students (e.g.\ \cite{rosen_discrete_2011, grimaldi_discrete_2006}). The approach most commonly taken is to define the \emph{characteristic polynomial} of the linear recurrence \eqref{eq:linear-recurrence} to be the polynomial
\begin{equation} \label{eq:characteristic-polynomial}
p(x) = x^n + c_{n-1}x^{n-1} + \cdots + c_1x + c_0.
\end{equation}
If the characteristic polynomial has $n$ distinct roots $\lambda_1,\ldots,\lambda_n$ in $\bar{F}$ (where $\bar{F}$ is the algebraic closure of $F$), then one can show that there exist unique $b_1,\ldots,b_n\in \bar{F}$ such that the solution to the linear recurrence with initial conditions $a_0,\ldots,a_{n-1}$ is given by the sequence 
\begin{equation}\label{eq:superposition-distinct} a_k = b_1\lambda_1^k+\cdots+b_n\lambda_n^k\end{equation}

In the more complicated case when there exist multiple roots of the characteristic polynomial $p(x)$, the solution to the linear recurrence assumes the form 
\begin{equation}\label{eq:solution-with-polynomials}
	a_k = p_1(k)\lambda_1^k+\cdots+p_t(k)\lambda_t^k,
\end{equation}
where $\lambda_1,\ldots,\lambda_t$ are the distinct roots of $p(x)$ with respective multiplicities $m_1,\ldots,m_t$, and $p_1(x),\ldots,p_t(x)$ are polynomials of degrees less than $m_1,\ldots,m_t$, respectively. 

A more elegant approach to solving the linear recurrence is to consider the companion matrix of the linear recurrence~\eqref{eq:linear-recurrence}, defined as
\begin{equation}\label{eq:companion-matrix}
	A = \begin{bmatrix}
		0 & 1 & 0 & \cdots & 0 \\
		0 & 0 & 1 & \cdots & 0 \\
		\vdots & \vdots & \vdots & \ddots & \vdots \\
		0 & 0 & 0 & \cdots & 1 \\
		-c_0 & -c_1 & -c_2 & \cdots & -c_{n-1}
	\end{bmatrix}.
\end{equation}
The $n\times{n}$ matrix $A$ is the transpose of the companion matrix as usually defined in linear algebra. Moreover, $p(x)$ is in fact the characteristic polynomial of the matrix $A$; thus, we have a correspondence between the eigenvalues of $A$ and the roots of $p(x)$. The companion matrix satisfies the matrix relation
\begin{equation} \label{eq:matrix-relation}
	\begin{bmatrix}
	a_{k+1} \\ a_{k+2} \\ \vdots \\ a_{k+n-1} \\ a_{k+n}
	\end{bmatrix}
	= 
	A
	\begin{bmatrix}
	a_{k} \\ a_{k+1} \\ \vdots \\ a_{k+n-2} \\ a_{k+n-1}
	\end{bmatrix}.
\end{equation}
Therefore a solution to the linear recurrence can be obtained by computing the general $k$th power of the companion matrix $A$, as
\begin{equation} \label{eq:kth-matrix-relation}
	\begin{bmatrix}
	a_{k} \\ a_{k+1} \\ \vdots \\ a_{k+n-2} \\ a_{k+n-1}
	\end{bmatrix}
	= 
	A^k
	\begin{bmatrix}
	a_{0} \\ a_{1} \\ \vdots \\ a_{n-2} \\ a_{n-1}
	\end{bmatrix}.
\end{equation}
This can of course be computed by diagonalizing $A$ if possible, or by bringing $A$ to its Jordan canonical form (see Section~\ref{sec:prelim} for details). 

When $R$ is non-commutative, the recurrence relation defined in \eqref{eq:linear-recurrence} should be called a \emph{left} linear recurrence, which differs, for example, from the right linear recurrence
\begin{equation} \label{right-linear}
	a_{k+n} = -a_kc_0-a_{k+1}c_1 - \cdots -a_{k+n-1}c_{n-1}.
\end{equation}  

	We recall that a central division algebra is a division algebra $\mathcal{D}$ which is finite dimensional over its center $F$, a field. The dimension $[\mathcal{D}:F]$ is a square integer, and we call $d=\sqrt{[\mathcal{D}:F]}$ the degree of the algebra $\mathcal{D}$ over $F$. Unless otherwise stated, the algebras in this article are assumed to be associative and unital. The notion of an eigenvalue generalizes to left and right eigenvalues for a matrix defined over a division algebra, and these play different roles in the theory (See Section~\ref{sec:division-prelim} for details and \cite{jacobson1996finite} for general reference on central division algebras). 
	
In this article we show that the solution method of using the companion matrix \eqref{eq:companion-matrix} generalizes to left linear recurrences over central division algebras under appropriate conditions, and that under these conditions, the classical methods for solving linear recurrences over fields follow through smoothly. One cannot expect these methods to work in general since, for example, the Jordan canonical form does not exist for general division algebras. 

Now we come to a subtle point in terminology. Over division algebras, the characteristic polynomial of a matrix has a different meaning, and it is not equal to the polynomial in \eqref{eq:characteristic-polynomial}; its precise definition will be recalled in Section~\ref{sec:division-prelim}. To avoid confusion, we refer to the polynomial $p(x) = x^n + c_{n-1}x^{n-1} + \cdots + c_1x + c_0$ associated with a left linear recurrence \eqref{eq:linear-recurrence} defined over a division algebra as its \emph{primitive characteristic polynomial}, to distinguish it from the characteristic polynomial of the companion matrix.

Our results are as follows:

\begin{thm} \label{thm:diagonal}
	Let $A$ be the companion matrix associated to a left linear recurrence \eqref{eq:linear-recurrence} of order $n$ with coefficients in a central division algebra $\mathcal{D}$, and with fixed initial conditions $a_0,\ldots,a_{n-1}\in \mathcal{D}$. Suppose that there exist $n$ distinct left (right) eigenvalues of $A$, denoted by $\lambda_1,\ldots,\lambda_n \in \mathcal{D}$, such that no three lie in the same conjugacy class. Let $V=V_n(\lambda_1,\ldots,\lambda_n)$ be the Vandermonde matrix (see Section~\ref{sec:prelim}). Then $V$ is invertible, and setting
	\begin{equation}
		\begin{bmatrix}
	b_1 \\ b_2 \\ \vdots \\ b_n
	\end{bmatrix} = V^{-1}	\begin{bmatrix}
	a_0 \\ a_1 \\ \vdots \\ a_{n-1}
	\end{bmatrix},
\end{equation}
the unique solution to the left linear recurrence is given by $a_k = \lambda_1^kb_1+\cdots+\lambda_n^kb_n$ (note that the coefficients appear on the right).
\end{thm}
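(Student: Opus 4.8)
The plan is to reduce the statement to the iterated companion-matrix relation \eqref{eq:kth-matrix-relation} together with a diagonalization $A = V D V^{-1}$, where $D = \operatorname{diag}(\lambda_1,\dots,\lambda_n)$. Granting these, $A^k = V D^k V^{-1}$ with $D^k = \operatorname{diag}(\lambda_1^k,\dots,\lambda_n^k)$, so \eqref{eq:kth-matrix-relation} gives
\begin{equation*}
  \begin{bmatrix} a_k \\ \vdots \\ a_{k+n-1}\end{bmatrix}
  \;=\; A^k \begin{bmatrix} a_0 \\ \vdots \\ a_{n-1}\end{bmatrix}
  \;=\; V D^k V^{-1}\begin{bmatrix} a_0 \\ \vdots \\ a_{n-1}\end{bmatrix}
  \;=\; V D^k \begin{bmatrix} b_1 \\ \vdots \\ b_n\end{bmatrix}.
\end{equation*}
Since the first row of $V$ is $(1,\dots,1)$, reading off the top coordinate yields $a_k = \lambda_1^k b_1 + \dots + \lambda_n^k b_n$; the coefficients $b_j$ sit on the right exactly because they are multiplied from the left by the diagonal entries $\lambda_j^k$ of $D^k$. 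Uniqueness of the resulting sequence is the elementary fact recalled in the introduction (valid over any ring), and the case $k=0$ (where $A^0=I$) together with the defining equation for the $b_j$ shows that the prescribed initial conditions are matched.

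So two things must be checked. First, that $V$ is invertible: this is precisely where the hypotheses that the $\lambda_j$ are distinct and that no three of them share a conjugacy class enter, being exactly the invertibility criterion for Vandermonde matrices over a central division algebra recalled in Section~\ref{sec:prelim}, which I would simply invoke. Second, the diagonalization $AV = VD$. For this I would verify that the $j$-th column $v_j = (1,\lambda_j,\dots,\lambda_j^{n-1})^{T}$ of $V$ satisfies $A v_j = v_j \lambda_j$: the first $n-1$ coordinates of this identity hold automatically, since the top $n-1$ rows of $A$ shift a vector upward, and the bottom coordinate is the equation $-c_0 - c_1\lambda_j - \dots - c_{n-1}\lambda_j^{n-1} = \lambda_j^{n}$, i.e.\ the vanishing at $\lambda_j$ of the primitive characteristic polynomial $p$ evaluated with its coefficients on the left. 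That this vanishing holds is forced by $\lambda_j$ being an eigenvalue of $A$: solving the top $n-1$ rows of an eigenvector equation ($Aw=\lambda_j w$ for the left notion, or $Aw=w\lambda_j$ for the right) shows $w$ must be a scalar multiple of $v_j$, after which the bottom row yields $p(\lambda_j)=0$; conveniently both the left- and right-eigenvalue conditions for a companion matrix collapse to this same equation, which is what lets the theorem be stated for either notion. Collecting columns gives $AV = V\operatorname{diag}(\lambda_1,\dots,\lambda_n)$, and with $V$ invertible this is $A = VDV^{-1}$.

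The only step that is not pure bookkeeping is the invertibility of $V$; this is the place where the conjugacy-class hypothesis is genuinely needed — already over the Hamilton quaternions, taking $\lambda_1,\lambda_2,\lambda_3 = i,j,k$ makes the rows of $V$ linearly dependent — so I would rely on the Vandermonde result of Section~\ref{sec:prelim} rather than reprove it here.
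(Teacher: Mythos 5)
Your proposal is, in substance, the paper's own proof: invertibility of $V$ comes from Lam's criterion (Theorem~\ref{thm:lam}; note it is stated in Section~\ref{sec:division-prelim}, not Section~\ref{sec:prelim}), the Vandermonde columns diagonalize the companion matrix because $Av_j=v_j\lambda_j$ whenever $p(\lambda_j)=0$ (you rederive this directly instead of citing Theorem~\ref{thm:cm}), and the rest is exactly the computation of Proposition~\ref{prop:diagonal-field}. So the main line is correct and follows the same route.

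One side claim in your write-up is false, although it does not affect that main line: you assert that the right-eigenvalue condition $Aw=w\lambda_j$ also collapses to $p(\lambda_j)=0$. It does not. The top $n-1$ rows force $w=(w_1,\,w_1\lambda_j,\,\ldots,\,w_1\lambda_j^{n-1})^{T}$, and the bottom row then gives $p(\mu)w_1=0$ with $\mu=w_1\lambda_j w_1^{-1}$, i.e.\ only some \emph{conjugate} of $\lambda_j$ must be a root of $p(x)$. Concretely, in Example~\ref{ex:diagonal} the element $-j=iji^{-1}$ is a right eigenvalue of $A$, yet $p(-j)=2ij\neq 0$, and using $-j$ in place of the root $j$ in the Vandermonde matrix would not yield a solution of the recurrence. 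So the ``(right)'' reading of the hypothesis is not obtained for free from your bottom-row computation: the conclusion $a_k=\lambda_1^kb_1+\cdots+\lambda_n^kb_n$ genuinely needs the $\lambda_j$ to be roots of $p(x)$, i.e.\ left eigenvalues; the correct relation to right eigenvalues is the one recorded in Theorem~\ref{thm:cm} (every left eigenvalue of a companion matrix is also a right eigenvalue, since $v_j$ commutes entrywise with $\lambda_j$, while a right eigenvalue is in general only conjugate to a left one). With that remark corrected or deleted, your proof coincides with the paper's.
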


\begin{thm} \label{thm:jordan}
	Let $A$ be the companion matrix associated to a left linear recurrence \eqref{eq:linear-recurrence} of order $n$ with coefficients in a central division algebra $\mathcal{D}$ of characteristic $0$, with fixed initial conditions $a_0,\ldots,a_{n-1}\in \mathcal{D}$. Let $\phi(x)$ be the minimal polynomial of $A$ over $F$, the center of $\mathcal{D}$. If $\phi(x)$ factors in $\mathcal{D}[x]$ to a product of linear factors, then there exist distinct left eigenvalues $\lambda_1,\ldots,\lambda_t \in \mathcal{D}$ of $A$, positive integers $m_1,\ldots,m_t$ with $\sum_{i=1}^tm_i = n$ and an invertible matrix $U$ such that if we set 	
		\begin{equation}
			\begin{bmatrix}
				b_1 \\ b_2 \\ \vdots \\ b_n
			\end{bmatrix} = U^{-1}	
			\begin{bmatrix}
				a_0 \\ a_1 \\ \vdots \\ a_{n-1}
			\end{bmatrix},
		\end{equation}
	we get that the unique solution to the linear recurrence is of the form 
	\begin{align*}
		a_k  =& p_1(k)\lambda_1^kb_1+\cdots+p_{m_1}(k)\lambda_1^kb_{m_1} \\
		     &+  p_{m_1+1}(k)\lambda_2^kb_{m_1+1}+\cdots+p_{m_2}(k)\lambda_2^kb_{m_2} \\
		     & \cdots \\
		     &+  p_{m_{t-1}+1}(k)\lambda_t^kb_{m_{t-1}+1}+\cdots+p_{n}(k)\lambda_t^kb_{n}, 
	\end{align*} 
where $p_1(x),\ldots,p_n(x)$ are polynomials in $\mathcal{D}[x]$ such that $\deg(p_j(x)) < m_i$ when $m_{i-1} < j \le m_i$. 
	\end{thm}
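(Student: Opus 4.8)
The plan is to mimic the classical Jordan-form proof, using the matrix relation \eqref{eq:kth-matrix-relation} as the starting point, but replacing the (nonexistent) Jordan canonical form over $\mathcal{D}$ by the rational/invariant-factor decomposition of the $F$-linear operator determined by $A$. First I would observe that although $A$ has entries in $\mathcal{D}$, left multiplication by $A$ on $\mathcal{D}^n$ (viewed as a left $\mathcal{D}$-module, equivalently an $F$-vector space of dimension $nd$) is an $F$-linear endomorphism whose minimal polynomial over $F$ is the $\phi(x)$ of the statement. Since $\phi(x)\in F[x]$, I can work with its factorization over $\mathcal{D}[x]$ into linear factors $x-\lambda$; the roots $\lambda$ of $\phi$ in $\mathcal{D}$ fall into finitely many conjugacy classes, and each conjugacy class of roots corresponds to exactly one monic irreducible factor of $\phi$ over $F$ (here I would cite the structure of roots of polynomials over division algebras recalled in Section~\ref{sec:division-prelim}). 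Choosing one representative left eigenvalue $\lambda_i$ per class gives the $\lambda_1,\dots,\lambda_t$.

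Next I would decompose $\mathcal{D}^n$ according to the primary/invariant-factor decomposition of $A$ as an $F[x]$-module. For each irreducible factor of $\phi$ over $F$, the corresponding generalized eigenspace is $A$-invariant, and on it $A$ acts with a single conjugacy class of eigenvalues; the $\mathcal{D}$-dimension of this piece is the integer $m_i$, and $\sum m_i = n$ because the total $\mathcal{D}$-dimension is $n$ (this requires knowing that the companion matrix $A$ is non-derogatory in the appropriate sense — the first $n-1$ coordinate vectors together with the recurrence force the module to be cyclic over $\mathcal{D}[x]$-acting-as-$A$, so each primary component has $\mathcal{D}$-dimension equal to the number of times the corresponding linear factor divides $\phi$). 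On each such invariant summand, the hypothesis that $\phi$ splits into linear factors over $\mathcal{D}$ lets me pick a $\mathcal{D}$-basis in which $A$ restricted to that summand is upper triangular with the single eigenvalue $\lambda_i$ repeated $m_i$ times on the diagonal — a "$\mathcal{D}$-Jordan-like'' form. Assembling these bases gives the invertible matrix $U$ (change of basis), so that $U^{-1}AU$ is block upper triangular with the $i$th block being $\lambda_i I_{m_i} + N_i$ for a strictly upper triangular $N_i$.

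Then I would compute $A^k$ by computing $(U^{-1}AU)^k$ block by block. For a single block $\lambda I + N$ with $N$ strictly upper triangular and, crucially, $\lambda N = N\lambda$ on that block — this commutation is exactly what the splitting hypothesis buys, since the block is conjugate to one with constant diagonal $\lambda$ in $F(\lambda)$-coefficients, so $N$ can be taken with entries in the centralizer of $\lambda$ — the binomial theorem gives $(\lambda I + N)^k = \sum_{j=0}^{m_i-1}\binom{k}{j}\lambda^{k-j}N^j$, and each entry of this is of the form $q(k)\lambda^k$ for a polynomial $q$ of degree $<m_i$ with coefficients in $\mathcal{D}$ (using that $\binom{k}{j}$ is a polynomial in $k$ of degree $j$ and that $\lambda^{-j}$ makes sense since $\lambda\ne 0$ when it is an eigenvalue of the companion matrix, whose constant term $c_0$ is nonzero). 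Reading off the first row of $A^k = U(U^{-1}AU)^kU^{-1}$ applied via \eqref{eq:kth-matrix-relation}, and absorbing the columns of $U$ and the rows of $U^{-1}$ into the definition of the $b_j$ and the polynomials $p_j$, yields the stated closed form; the characteristic-$0$ hypothesis is used to guarantee that $\binom{k}{j}$ genuinely ranges over a polynomial of degree $j$ (no collapse) and, more importantly, to invoke whatever splitting/eigenvalue results from Section~\ref{sec:division-prelim} require it.

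The main obstacle I anticipate is the non-commutativity in the binomial expansion: $\lambda$ and $N$ need not commute inside $\mathcal{D}$, so $(\lambda I+N)^k$ is \emph{not} $\sum\binom{k}{j}\lambda^{k-j}N^j$ in general. The fix — and the technical heart of the argument — is to choose the $\mathcal{D}$-basis of each invariant summand so that the diagonal entries are a \emph{central-over-}$F(\lambda_i)$ scalar, i.e.\ to realize the block as base change to $\mathcal{D}$ of a genuine $F(\lambda_i)$-Jordan block; then $\lambda_i$ commutes with the entries of $N_i$ and the classical computation goes through verbatim. Showing that such a basis exists is precisely where "$\phi$ factors into linear factors in $\mathcal{D}[x]$'' is doing real work, and I would spend most of the proof justifying that reduction (likely by passing to the maximal subfield $F(\lambda_i)\subseteq\mathcal{D}$, applying the usual rational canonical form over that field to each primary component, and then checking the companion-matrix cyclicity forces a single Jordan block per eigenvalue of size $m_i$). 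The bookkeeping that turns "first row of $U(U^{-1}AU)^kU^{-1}$'' into the displayed sum with $\deg p_j<m_i$ for $m_{i-1}<j\le m_i$ is routine once the block computation is in hand.
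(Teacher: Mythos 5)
Your overall skeleton (split by conjugacy class of eigenvalues, put each block in upper triangular ``Jordan-like'' form, expand the block powers binomially, then absorb $U$ and $U^{-1}$ into the $b_j$ and the polynomials $p_j$) is the same as the paper's, but you refuse the one ingredient the paper's proof actually rests on, and the substitute you sketch has a genuine hole at its central step. The paper's proof is short precisely because it invokes Theorem~\ref{thm:wedderburn-jordan}: in characteristic $0$, the hypothesis that $\phi$ factors into linear factors in $\mathcal{D}[x]$ gives (statement (4), via \cite{dokovic:1985}) that $A$ is similar over $\mathcal{D}$ to a matrix in \emph{honest} Jordan normal form, whose off-diagonal part consists of $0$'s and $1$'s; these are central, so the power formula \eqref{eq:kth-power-jordan-block} applies with no commutation problem whatsoever. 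The only extra inputs are the Section~\ref{sec:division-prelim} observations that the diagonal entries may be replaced by left eigenvalues in their conjugacy classes and that a companion matrix has a one-dimensional eigenspace for each left eigenvalue (Chapman--Machen), hence exactly one Jordan block per eigenvalue, which yields the $m_i$ with $\sum m_i=n$.

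You instead declare the Jordan form over $\mathcal{D}$ ``nonexistent'' and try to rebuild it by hand: primary decomposition over $F[x]$, triangularization, and then a basis change making the nilpotent part $N_i$ commute with $\lambda_i$. That last step is not bookkeeping -- it \emph{is} the theorem you are avoiding, and your proposed route (``apply the usual rational canonical form over $F(\lambda_i)$ to each primary component'') does not go through as described: the primary component, viewed as a right $F(\lambda_i)$-space, has dimension $m_i[\mathcal{D}:F(\lambda_i)]$ (and $F(\lambda_i)$ need not be maximal), so the canonical form over $F(\lambda_i)$ produces an $F(\lambda_i)$-basis of a larger space, and descending from that to a $\mathcal{D}$-basis in which the block is $\lambda_i I+N_i$ with $N_i$ standard is exactly the nontrivial content of \cite{dokovic:1985}; it is also where characteristic $0$ (separability of the eigenvalues over $F$) genuinely enters -- not, as you suggest, in keeping $\binom{k}{j}$ a degree-$j$ polynomial. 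Likewise your ``cyclicity over $\mathcal{D}[x]$-acting-as-$A$'' argument for one block per eigenvalue is vague where the paper has a clean one-dimensional-kernel argument. As written, then, the proof is incomplete at its technical heart; it becomes correct (and collapses to the paper's proof) once you replace the hand-made construction by a citation of Theorem~\ref{thm:wedderburn-jordan}(4) together with the unique-Jordan-block remark at the end of Section~\ref{sec:division-prelim}.
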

The characteristic $0$ condition on the algebra $\mathcal{D}$ in Theorem~\ref{thm:jordan} can be removed if we assume that the eigenvalues of $A$ are separable over $F$. 

Finally, in Section~\ref{sec:octonions} we take a walk on the wild side, and consider left linear recurrences over non-associative division algebras. Specifically, we prove that for left linear recurrences of order $2$ over octonion division algebras, the matrix relation~\eqref{eq:matrix-relation} decomposes into a matrix relation over a quaternion subalgebra, allowing us to explicitly solve the recurrence.

\begin{thm}\label{thm:octonion} 
	Let $p(x)$ be the primitive characteristic polynomial associated to a left linear recurrence of order $2$, with coefficients in an octonion division algebra $O$, along with fixed initial conditions $a_0,a_1\in O$. Assume that $p(x)$ has two distinct roots $\lambda_1,\mu_1$. 	
Let $\overline{p(x)}$ be the conjugate (octonion conjugation) polynomial of $p(x)$. Then $p(x)$  also has two distinct roots $\lambda_2,\mu_2$ (in the same conjugacy class as $\lambda_1,\mu_1$, respectively), and 
	there exist constants $b_1,b_2,c_1,c_2$ in a quaternion subalgebra $Q$ containing the coefficients of $p(x)$, such that the solution of the left linear recurrence is given by 
	\[a_k = \lambda_1^kb_1 + \mu_1^kb_2 + (c_1\bar{\lambda}_2^k+c_2\bar{\mu}_2^k)\ell,\] 
	where $\ell$ is some fixed element in $O$ (of trace $\ell + \bar{\ell}=0$) such that $O=Q\oplus Q\ell$. 
\end{thm}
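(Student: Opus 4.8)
The plan is to exploit the Cayley--Dickson presentation $O = Q \oplus Q\ell$ to break the order-$2$ octonion recurrence into two independent order-$2$ recurrences over the quaternion division algebra $Q$, and then to apply Theorem~\ref{thm:diagonal} to each of them.

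First I would set up $Q$. Since any two octonions generate an associative subalgebra (Artin), the subalgebra $F[c_0,c_1]$ is associative and hence contained in some quaternion division subalgebra of $O$; moreover I would check, by a short computation with the Cayley--Dickson product, that the roots $\lambda_1,\mu_1$ can be taken inside such a $Q$: writing a hypothetical root as $q_0 + q_1\ell$ with $q_1 \neq 0$ and substituting into $p$, the $Q\ell$-component of the relation forces $c_1 \in F$ and then the $Q$-component forces $c_0 \in F$, so a root outside $Q$ can only occur when $p$ has central coefficients, in which case a pair of conjugate roots inside $Q$ is readily produced. Fixing such a quaternion subalgebra $Q \ni c_0, c_1, \lambda_1, \mu_1$ and an element $\ell \in Q^{\perp}$ with $\ell + \bar\ell = 0$ (and $\ell^2 \neq 0$, by anisotropy of the norm form of $O$), every octonion decomposes uniquely as $Q \oplus Q\ell$. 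Writing the unknown sequence as $a_k = u_k + v_k\ell$ with $u_k,v_k \in Q$ and substituting into $a_{k+2} = -c_0a_k - c_1a_{k+1}$, the Cayley--Dickson multiplication rule (which sends $c_i\cdot(v\ell)$ to $(v c_i)\ell$) shows that the $Q$-part and the $Q\ell$-part decouple: $u_k$ satisfies the left linear recurrence $u_{k+2} = -c_0u_k - c_1u_{k+1}$ over $Q$, whose primitive characteristic polynomial is exactly $p(x)$, while $v_k$ satisfies the right linear recurrence $v_{k+2} = -v_kc_0 - v_{k+1}c_1$ over $Q$. The initial data $a_0 = u_0 + v_0\ell$, $a_1 = u_1 + v_1\ell$ determines $u_0,u_1,v_0,v_1$, and since the octonion recurrence has a unique solution for prescribed initial conditions, it suffices to solve these two quaternionic recurrences.

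For the $u$-part I would apply Theorem~\ref{thm:diagonal} over the central division algebra $Q$: $p(x)$ has the two distinct roots $\lambda_1 \neq \mu_1$ in $Q$, so the associated companion matrix over $Q$ has the distinct left eigenvalues $\lambda_1,\mu_1$, the condition ``no three in one conjugacy class'' is vacuous for $n = 2$, the Vandermonde matrix $V_2(\lambda_1,\mu_1)$ is invertible, and we obtain $u_k = \lambda_1^kb_1 + \mu_1^kb_2$ with $(b_1,b_2)^{T} = V_2(\lambda_1,\mu_1)^{-1}(u_0,u_1)^{T} \in Q^2$. For the $v$-part I would pass through the conjugation anti-automorphism of $Q$: conjugating the right recurrence turns it into the \emph{left} recurrence $\overline{v_{k+2}} = -\bar c_0\,\overline{v_k} - \bar c_1\,\overline{v_{k+1}}$ for the sequence $(\overline{v_k})$, whose primitive characteristic polynomial is the conjugate polynomial $\overline{p(x)} = x^2 + \bar c_1x + \bar c_0$. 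To apply Theorem~\ref{thm:diagonal} again I need $\overline{p(x)}$ to have two distinct roots in $Q$: factoring $p(x) = (x - a)(x - \lambda_1) = (x - a')(x - \mu_1)$ in $Q[x]$ (possible since $\lambda_1,\mu_1 \in Q$ are roots) and conjugating coefficients gives $\overline{p(x)} = (x - \bar\lambda_1)(x - \bar a) = (x - \bar\mu_1)(x - \bar a')$, so $\bar a$ and $\bar a'$ are roots of $\overline{p(x)}$ in $Q$; they are distinct because $a \neq a'$ (otherwise $\lambda_1 = \mu_1$), and each lies in the conjugacy class of $\mu_1$, resp.\ $\lambda_1$, since the root of the quotient in a linear factorization over a division algebra is conjugate to the complementary root and conjugation preserves reduced characteristic polynomials. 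Relabelling these roots as $\lambda_2$ (in the class of $\lambda_1$) and $\mu_2$ (in the class of $\mu_1$), Theorem~\ref{thm:diagonal} yields $\overline{v_k} = \lambda_2^ke_1 + \mu_2^ke_2$ for suitable $e_1,e_2 \in Q$, and conjugating back gives $v_k = \bar e_1\,\bar\lambda_2^{\,k} + \bar e_2\,\bar\mu_2^{\,k}$. Setting $c_1 = \bar e_1$, $c_2 = \bar e_2$ and recombining, $a_k = u_k + v_k\ell = \lambda_1^kb_1 + \mu_1^kb_2 + (c_1\bar\lambda_2^{\,k} + c_2\bar\mu_2^{\,k})\ell$, as claimed.

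The main obstacle is the bookkeeping around the location and conjugacy classes of the roots: proving that $\lambda_1,\mu_1$ (and therefore $\lambda_2,\mu_2$) may be placed inside a single quaternion subalgebra $Q$, and correctly matching the left/right evaluation conventions with the octonion and quaternion conjugations so that it is the conjugate polynomial $\overline{p(x)}$, rather than some other twist, that governs the $v$-component. By contrast, the decoupling computation is a direct application of the Cayley--Dickson formula, and the two uses of Theorem~\ref{thm:diagonal} are immediate because $n = 2$.
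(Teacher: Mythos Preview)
Your proof is correct and follows essentially the same strategy as the paper: split the recurrence through the Cayley--Dickson decomposition $O=Q\oplus Q\ell$ into a $Q$-component governed by $p(x)$ and a $Q\ell$-component governed (after conjugation) by $\overline{p(x)}$, then diagonalize each $2\times 2$ quaternionic companion matrix via Theorem~\ref{thm:diagonal}. The differences are purely presentational: the paper packages the decoupling as a matrix identity $B(v\ell)=\overline{(\bar B\bar v)}\ell$ and invokes the companion polynomial $C_p(x)=p(x)\overline{p(x)}$ to locate the roots of $\overline{p(x)}$, whereas you decouple termwise on the sequence and obtain the roots of $\overline{p(x)}$ by conjugating an explicit linear factorization of $p(x)$; your extra care in checking that the roots $\lambda_1,\mu_1$ lie in $Q$ (reducing to the central-coefficient case otherwise) is a point the paper handles more briefly via the spherical-root dichotomy.
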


\begin{thm}\label{thm:octonion2} 
Let $p(x)$ be the primitive characteristic polynomial associated to a left linear recurrence of order $2$, with coefficients in an octonion division algebra $O$ of characteristic $0$, along with fixed initial conditions $a_0,a_1\in O$.
	Assume that $p(x)$ has a unique root $\lambda \in O$, and let $\overline{p(x)}$ be its conjugate. Then $\overline{p(x)}$ also has a unique root $\mu$ (in the same conjugacy class as $\lambda$), 	
	and there exist constants $b_1,b_2,b_3,b_4$ and linear polynomials $p_1(x),p_2(x),p_3(x),p_4(x)$ over the quaternion subalgebra $Q$ generated by the coefficients of $p(x)$ such that the solution of the left linear recurrence is given by 
	\[a_k = p_1(k)\lambda^kb_1 + p_2(k)\lambda^kb_2 + (b_3\bar{\mu}^kp_3(k)+b_4\bar{\mu}^kp_4(k))\ell,\]
		where $\ell$ is some fixed element in $O$ (of trace $\ell + \bar{\ell}=0$) such that $O=Q\oplus Q\ell$.  
\end{thm}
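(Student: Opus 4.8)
The plan is to mirror the strategy used for Theorem~\ref{thm:octonion}, reducing the octonionic problem to a linear-algebra problem over the quaternion subalgebra $Q$ generated by the coefficients of $p(x)$, but now in the degenerate case where the primitive characteristic polynomial has a repeated-type structure. First I would fix a decomposition $O = Q \oplus Q\ell$ with $\ell + \bar\ell = 0$ and write each term $a_k = x_k + y_k\ell$ with $x_k, y_k \in Q$. Using the standard octonion multiplication rule for the Cayley--Dickson construction ($(p + q\ell)(r + s\ell) = (pr - \bar s q) + (sp + q\bar r)\ell$, with the appropriate sign conventions from Section~\ref{sec:octonions}), the order-$2$ left recurrence $a_{k+2} = -c_1 a_{k+1} - c_0 a_k$ with $c_0, c_1 \in Q$ splits into two coupled recurrences: one for the $Q$-component $x_k$ governed by $p(x) = x^2 + c_1 x + c_0$, and one for the $Q\ell$-component $y_k$ governed by $\overline{p(x)}$ (this is exactly the decomposition asserted in the theorem and the surrounding text, so I would cite the computation from the proof of Theorem~\ref{thm:octonion}). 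The hypothesis that $p(x)$ has a unique root $\lambda$ in $O$ forces $\lambda \in Q$ (a root outside $Q$ would, by the conjugacy-class structure over quaternion algebras, come with an entire sphere of conjugate roots, contradicting uniqueness) and moreover forces $p(x)$ to be, up to the relevant notion, a "double root" in the sense that the companion matrix over $Q$ is non-diagonalizable; dually $\overline{p(x)}$ has the unique root $\mu = \bar\lambda$ (or the appropriate conjugate) with the same degeneracy.

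Next I would handle each component separately using the field/quaternion theory already developed. For the $x_k$-component: the companion matrix $A_p$ of $p(x)$ over $Q$ is a $2\times 2$ matrix with a unique eigenvalue-type $\lambda$ and non-diagonalizable Jordan-type structure, so by Theorem~\ref{thm:jordan} applied over the quaternion algebra $Q$ (which is a central division algebra of characteristic $0$, and whose eigenvalues here are automatically separable since we are in characteristic $0$), we get $x_k = p_1(k)\lambda^k b_1 + p_2(k)\lambda^k b_2$ with $p_1, p_2$ linear polynomials over $Q$ and $b_1, b_2 \in Q$ determined from the initial data $x_0, y_0$ (equivalently $a_0, a_1$) via the appropriate invertible change-of-basis matrix $U$. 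For the $y_k$-component: the same argument applied to $\overline{p(x)}$, whose companion matrix has unique eigenvalue-type $\mu$, yields $y_k = b_3\bar\mu^k p_3(k) + b_4 \bar\mu^k p_4(k)$ — here I must be careful about the side on which coefficients and polynomials sit, since the $Q\ell$-slot recurrence is conjugated and hence the natural module structure is on the opposite side, which is precisely why the theorem states this component with coefficients $b_3, b_4$ on the left and $p_3, p_4, \bar\mu^k$ arranged as written. Reassembling, $a_k = x_k + y_k \ell = p_1(k)\lambda^k b_1 + p_2(k)\lambda^k b_2 + (b_3\bar\mu^k p_3(k) + b_4\bar\mu^k p_4(k))\ell$, which is the claimed formula.

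Finally I would verify existence and uniqueness: uniqueness is automatic since an order-$2$ recurrence over any ring has a unique solution for given initial conditions (stated in the Introduction), and the constructed sequence manifestly satisfies the recurrence by the component-wise construction, so it must be \emph{the} solution; the constants $b_1,b_2,b_3,b_4$ and the coefficients of $p_1,\dots,p_4$ are then uniquely pinned down by $a_0$ and $a_1$. The main obstacle I anticipate is bookkeeping the non-associativity: in the octonion product the cross terms between $Q$ and $Q\ell$ do not associate, so the "decoupling" of the two recurrences needs a careful check that the mixed products vanish or recombine correctly at each step (this is the one genuinely octonion-specific point, and it is the heart of the argument that also underlies Theorem~\ref{thm:octonion}); once that is in place, everything else is an application of the quaternionic (hence central-division-algebra) Jordan-form machinery of Theorem~\ref{thm:jordan}. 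A secondary subtlety is making precise the claim "$p(x)$ has a unique root implies the companion matrix is non-diagonalizable over $Q$", which requires recalling from Section~\ref{sec:division-prelim} that over a quaternion division algebra a polynomial either has roots filling a whole conjugacy class (sphere) or has at most finitely many, and relating the single-root condition to the eigenvalue structure of $A_p$.
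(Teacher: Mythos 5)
Your plan is essentially the paper's own proof: the paper likewise reduces the order-$2$ octonionic recurrence to the $Q\oplus Q\ell$ decomposition already established for Theorem~\ref{thm:octonion} (via the conjugation lemma and \eqref{eq:octonion-decomp}), and then simply replaces diagonalization of $A$ and $\bar{A}$ over $Q$ by their Jordan forms, exactly as you propose, with uniqueness coming from the general existence/uniqueness of solutions over any ring. One small correction: the unique root $\mu$ of $\overline{p(x)}$ is in general \emph{not} $\bar{\lambda}$ --- e.g.\ for $p(x)=x^2-(i+j)x-k=(x-j)(x-i)$ the unique root is $\lambda=i$, while $\overline{p(x)}=x^2+(i+j)x+k=(x+i)(x+j)$ has unique root $\mu=-j\neq -i$ --- so keep only your hedged version: what one actually shows (as in the proof of Theorem~\ref{thm:octonion}, via the companion polynomial $C_p(x)=p(x)\overline{p(x)}$) is that $\mu$ lies in the conjugacy class of $\lambda$, which is all the statement and the Jordan-form argument require.
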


\section{Linear recurrences over fields} \label{sec:prelim}


In this section we recall the classical method of solving linear recurrence relations over fields using the companion matrix. 
Recall that in the classical case of a linear recurrence relation over a field $F$, which we assume to be algebraically closed for simplicity, the recurrence can be described by the following matrix relation:

\begin{equation} \label{eq:matrix-relation2}
	\begin{bmatrix}
	a_{k+1} \\ a_{k+2} \\ \vdots \\ a_{k+n-1} \\ a_{k+n}
	\end{bmatrix}
	= 
	\begin{bmatrix}
		0 & 1 & 0 & \cdots & 0 \\
		0 & 0 & 1 & \cdots & 0 \\
		\vdots & \vdots & \vdots & \ddots & \vdots \\
		0 & 0 & 0 & \cdots & 1 \\
		-c_0 & -c_1 & -c_2 & \cdots & -c_{n-1}
	\end{bmatrix}
	\begin{bmatrix}
	a_{k} \\ a_{k+1} \\ \vdots \\ a_{k+n-2} \\ a_{k+n-1}
	\end{bmatrix}.
\end{equation}
The square $n$ by $n$ matrix $A$ in \eqref{eq:matrix-relation2} is the companion matrix of the linear recurrence \eqref{eq:linear-recurrence}; as mentioned in the introduction it is the transpose of the companion matrix to the characteristic polynomial $p(x) = c_0+c_1x+\cdots+c_{n-1}x^{n-1}+x^n$. 

Given an eigenvalue $\lambda$ of the matrix $A$, i.e., any root of the characteristic polynomial $p(x)$, we get a corresponding eigenvector $$v=	\begin{bmatrix}
	1 \\ \lambda \\ \vdots \\ \lambda^{n-2} \\ \lambda^{n-1}
	\end{bmatrix},$$
as multiplying by $A$, we get (as we know that $p(\lambda)=0$) 
\[Av = \begin{bmatrix}
	\lambda \\ \lambda^2 \\ \vdots \\ \lambda^{n-1} \\ -c_0-c_1\lambda-\ldots-c_{n-1}\lambda^{n-1}
	\end{bmatrix} = \begin{bmatrix}
	\lambda \\ \lambda^2 \\ \vdots \\ \lambda^{n-1} \\ \lambda^{n}
	\end{bmatrix} = \lambda{v}.\]
This implies that $a_k = \lambda^k$ is a solution to the general recurrence relation \eqref{eq:linear-recurrence} (i.e. without initial conditions). 

Given $t$ distinct eigenvalues $\lambda_1,\ldots,\lambda_t$, we can do a superposition of solutions, and obtain solutions to the general recurrence of the form 
\begin{equation} \label{eq:superposition}
	a_k = \alpha_1\lambda_1^k+\cdots+\alpha_t\lambda_t^k
\end{equation}
for any $\alpha_1,\ldots,\alpha_t\in F$.

If the matrix $A$ has $n$ distinct eigenvalues it is diagonalizable. This is equivalent to the characteristic polynomial having $n$ distinct roots.

\begin{prop} \label{prop:diagonal-field}
Suppose that the $n\times{n}$ companion matrix $C$ has $n$ distinct eigenvalues $\lambda_1,\ldots,\lambda_n$. Then we get a unique solution to the recurrence relation \eqref{eq:linear-recurrence}, given initial conditions $a_0,\ldots,a_{n-1} \in F$. 
\end{prop}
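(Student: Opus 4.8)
The plan is to reduce Proposition~\ref{prop:diagonal-field} to the diagonalizability of the companion matrix and the invertibility of the Vandermonde matrix, following the standard linear-algebra argument. First I would assemble the eigenvectors: by the computation preceding the proposition, each distinct eigenvalue $\lambda_i$ has eigenvector $v_i = (1,\lambda_i,\ldots,\lambda_i^{n-1})^T$, and stacking these as columns gives exactly the Vandermonde matrix $V = V_n(\lambda_1,\ldots,\lambda_n)$. Since the $\lambda_i$ are distinct, $\det V = \prod_{i<j}(\lambda_j-\lambda_i) \ne 0$, so $V$ is invertible; equivalently, the $v_i$ are linearly independent, so $A$ is diagonalizable with $V^{-1}AV = \operatorname{diag}(\lambda_1,\ldots,\lambda_n)$.

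Next I would use this to build the solution. Given initial conditions, define $(\alpha_1,\ldots,\alpha_n)^T = V^{-1}(a_0,\ldots,a_{n-1})^T$, and set $a_k = \alpha_1\lambda_1^k + \cdots + \alpha_n\lambda_n^k$ as in the superposition formula~\eqref{eq:superposition}. By the earlier discussion, each term $\lambda_i^k$ solves the general recurrence~\eqref{eq:linear-recurrence}, hence so does any linear combination; and by construction, evaluating this sequence at $k=0,\ldots,n-1$ recovers $V(\alpha_1,\ldots,\alpha_n)^T = (a_0,\ldots,a_{n-1})^T$, so the initial conditions are matched. Thus a solution exists.

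For uniqueness I would invoke the observation from the introduction: over any ring, the recurrence~\eqref{eq:linear-recurrence} together with $n$ initial conditions determines all subsequent terms by induction, so at most one sequence over $F$ satisfies both. Combining existence and uniqueness gives the proposition. Alternatively, one can phrase the whole argument through~\eqref{eq:kth-matrix-relation}: $A^k = V\operatorname{diag}(\lambda_1^k,\ldots,\lambda_n^k)V^{-1}$, so the vector of initial conditions propagates explicitly, and reading off the first coordinate yields the same closed form.

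There is no serious obstacle here — this is the textbook diagonalization argument. The only point requiring a word of care is making precise that "solution to the general recurrence" plus "matches the initial conditions" is the same as "the unique solution," i.e., that the superposition lands in the correct affine space of sequences; this is immediate once one notes the map sending a sequence to its first $n$ terms is a bijection between solution sequences and $F^n$. I would state the Vandermonde determinant formula explicitly (or cite Section~\ref{sec:prelim}) since its non-commutative analogue will matter later, but over a field it is classical.
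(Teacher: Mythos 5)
Your proposal is correct and follows essentially the same route as the paper: diagonalize the companion matrix via the Vandermonde matrix of the distinct eigenvalues, set $(\alpha_1,\ldots,\alpha_n)^T = V^{-1}(a_0,\ldots,a_{n-1})^T$, and read off $a_k = \alpha_1\lambda_1^k+\cdots+\alpha_n\lambda_n^k$. The only difference is that you spell out the matching of initial conditions and the induction-based uniqueness, which the paper leaves implicit.
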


\begin{proof}
The condition of $n$ distinct eigenvalues is equivalent to any of the following  conditions:
\begin{enumerate}
	\item The characteristic polynomial $p(x)$ has $n$ distinct roots.
	\item The polynomial $p(x)$ is separable.
	\item The discriminant of $p(x)$ is non-zero. 
	\item The Vandermonde matrix of the roots $\lambda_1,\ldots,\lambda_n$ of $p(x)$
	\[ V = V_n(\lambda_1,\ldots,\lambda_n) = 
\begin{bmatrix}
1 & 1 & \cdots & 1 \\
\lambda_1 & \lambda_2 & \cdots & \lambda_n \\
\lambda_1^2 & \lambda_2^2 & \cdots & \lambda_n^2 \\
\vdots & \vdots & \ddots & \vdots \\
\lambda_1^{n-1} & \lambda_2^{n-1} & \cdots & \lambda_n^{n-1}
\end{bmatrix}
\] 	is invertible.
	\item The determinant of $V$ is non-zero.
\end{enumerate}

This implies that $C$ is diagonalizable and we have $C=VDV^{-1}$, where 
\[D=\begin{bmatrix}
\lambda_1 & 0   & \cdots & 0 \\
0   & \lambda_2 & \cdots & 0 \\
\vdots & \vdots & \ddots & \vdots \\
0   & 0   & \cdots & \lambda_n
\end{bmatrix}.\]

The unique solution is given by $a_k = \alpha_1\lambda_1^k+\ldots+\alpha_n\lambda_n^k$, where 
\begin{equation}
		\begin{bmatrix}
	\alpha_1 \\ \alpha_2 \\ \vdots \\ \alpha_n
	\end{bmatrix} = V^{-1}	\begin{bmatrix}
	a_0 \\ a_1 \\ \vdots \\ a_{n-1}
	\end{bmatrix}.
\end{equation}
\end{proof}

Thus, in the case of $n$ distinct eigenvalues, the recurrence relation can be computed efficiently by
\begin{equation}
	\begin{bmatrix}
	a_{k+1} \\ a_{k+2} \\ \vdots \\ a_{k+n}
	\end{bmatrix}
	= 
C^k	\begin{bmatrix}
	a_0 \\ a_1 \\ \vdots \\ a_{n-1}
	\end{bmatrix} = VD^kV^{-1} \begin{bmatrix}
	a_0 \\ a_1 \\ \vdots \\ a_{n-1}
	\end{bmatrix},
\end{equation}
using the notations from the proof. 

Now we deal with the general case, i.e., when $A$ does not have $n$ distinct eigenvalues. 
Suppose that $\lambda$ is a multiple root of $p(x)$ of multiplicity $m$. In this case, since the geometric multiplicity of $\lambda$ is always $1$ for a companion matrix (see \cite[Theorem~3.3.14]{Horn-Johnson2013}), 
the matrix $C$ is no longer diagonalizable, and there is a unique Jordan block corresponding to $\lambda$, of the form
\[J_\lambda = \begin{bmatrix}
\lambda & 1   & \cdots & 0 \\
0   & \lambda & \ddots & 0 \\
\vdots & \vdots & \ddots & 1 \\
0   & 0   & \cdots & \lambda
\end{bmatrix}_{m\times m}.\]

Taking the $k$th power of $J_\lambda$, we get
\begin{equation} \label{eq:kth-power-jordan-block}
J_\lambda^k = 
\begin{bmatrix}
\lambda^k & \binom{k}{1} \lambda^{k-1} & \binom{k}{2} \lambda^{k-2} & \cdots & \binom{k}{m-1} \lambda^{k - (m-1)} \\
0 & \lambda^k & \binom{k}{1} \lambda^{k-1} & \cdots & \binom{k}{m-2} \lambda^{k - (m-2)} \\
0 & 0 & \lambda^k & \cdots & \binom{k}{m-3} \lambda^{k - (m-3)} \\
\vdots & \vdots & \vdots & \ddots & \vdots \\
0 & 0 & 0 & \cdots & \lambda^k
\end{bmatrix}.
\end{equation}

Thus if $A$ has exactly $t$ distinct eigenvalues $\lambda_1,\ldots,\lambda_t$ with respective multiplicities $m_1,\ldots,m_t$, we denote the respective Jordan blocks by $J_{m_1}(\lambda_1),\ldots,J_{m_t}(\lambda_1)$. Then there exists an invertible matrix $U$ such that $A = UJU^{-1}$ where $J$ is the block diagonal matrix with blocks $J_{m_1}(\lambda_1),\ldots,J_{m_t}(\lambda_t)$. Now it is clear that we get 
\begin{equation}
	\begin{bmatrix}
	a_{k} \\ a_{k+1} \\ \vdots \\ a_{k+n-1}
	\end{bmatrix}
	= 
C^k	\begin{bmatrix}
	a_0 \\ a_1 \\ \vdots \\ a_{n-1}
	\end{bmatrix} = UJ^kU^{-1} \begin{bmatrix}
	a_0 \\ a_1 \\ \vdots \\ a_{n-1}
	\end{bmatrix}.
\end{equation}
Setting 
	\begin{equation}
		\begin{bmatrix}
	b_1 \\ b_2 \\ \vdots \\ b_n
	\end{bmatrix} = U^{-1}	\begin{bmatrix}
	a_0 \\ a_1 \\ \vdots \\ a_{n-1}
	\end{bmatrix},
\end{equation}
we get, using \eqref{eq:kth-power-jordan-block}, the solution \eqref{eq:solution-with-polynomials} from the introduction.

\section{Left and right eigenvalues in division algebras} \label{sec:division-prelim}

In this section, we collect known results from the literature that generalize the tools used in Section~\ref{sec:prelim} to division algebras. 

Let $\mathcal{D}$ be a (associative) division algebra over a field $F$. Then $\mathcal{D}^n$ has a natural structure of a $\mathcal{D}$-bimodule, which, by convention, we call a vector space over $\mathcal{D}$. As usual, we let $M_n(\mathcal{D})$ denote the $F$-algebra of $n$ by $n$ matrices with coefficients in $\mathcal{D}$. 
 A matrix in $M_n(\mathcal{D})$ is an endomorphism of $\mathcal{D}^n$ when viewed as a right module over $\mathcal{D}$. The ring $M_n(\mathcal{D})$ can be embedded in $M_{nd}(K)$, where $K$ is a maximal subfield of $D$ and $d=[D:K]$ (not to be confused with the degree of $\mathcal{D}$ as defined in the introduction). We denote this embedding by $f:M_n(\mathcal{D})\to M_n(K)$, and define the \emph{characteristic polynomial} of $A\in M_n(\mathcal{D})$ by $\Phi_A(x)=\det(f(A)-xI)$. 

We denote by $\mathcal{D}[x]$ the algebra of polynomials in a central indeterminate $x$, with all coefficients on the left.
For a polynomial $f(x)=c_nx^n+\cdots +c_1x+c_0\in \mathcal{D}[x]$ and an element $\lambda\in{\mathcal{D}}$ we define $f(\lambda)=c_n\lambda^n+\cdots +c_1\lambda+c_0.$ An element $\lambda\in{\mathcal{D}}$ is called a root of $f(x)\in \mathcal{D}[x]$ if $f(\lambda)=0$. It is well-known that $\lambda$ is a root of $f(x)$ if and only if there exists a polynomial $g(x)\in \mathcal{D}[x]$ such that $f(x)=g(x)(x-\lambda)$ (see \cite[Theorem 1]{gordon-motzkin1965}). 

The classical notion of eigenvalues and eigenvectors for matrices over fields extends to matrices over division algebras in two variants: given a division algebra $\mathcal{D}$ and a matrix $B \in M_n(\mathcal{D})$, an element $\lambda \in \mathcal{D}$ is a left\footnote{In many texts, e.g.\ \cite[Chapter 8]{cohn:1995}, ``our'' left eigenvalue is called a singular eigenvalue, while  left eigenvalue refers to a different type of eigenvalue.} (or right, respectively) eigenvalue of $B$ if there exists a nonzero $v \in \mathcal{D}^n$ for which $Bv=\lambda v$ (or $Bv=v\lambda)$.
When $\mathcal{D}$ is an associative division algebra, the right eigenvalues are relatively well-understood (see \cite{ChapmanMachen:2017}). 

The situation is more complicated for left eigenvalues, but nevertheless, a complete description was obtained for $2\times 2$ matrices over the real quaternion algebra $\mathbb{H}$ in \cite{HuangSo:2001}, and without any significant modification, their technique actually applies to any quaternion division algebra.

Two elements $d_1,d_2$ in a division algebra $\mathcal{D}$ are conjugate if there exists some nonzero $q\in \mathcal{D}$ such that $d_1 = qd_2q^{-1}$. The conjugacy class of $d_1$ in $\mathcal{D}$ is the set of all its conjugates. It is a well-known fact that any conjugate of a right eigenvalue of a matrix in $M_n(\mathcal{D})$ is a right eigenvalue (see e.g.\ \cite{cohn:1995}). 
	
We recall the following theorem regarding the eigenvalues of companion matrices. 

\begin{thm}[Chapman--Machen\cite{ChapmanMachen:2017}]\label{thm:cm}
Let $\mathcal{D}$ be a division algebra, and 
let $C$ be a companion matrix as in \eqref{eq:matrix-relation2} with coefficients in $\mathcal{D}$, and let $p(x)=x^n+c_{n-1}x^{n-1}+\cdots+c_0$ be its primitive characteristic polynomial~\footnote{In \cite{ChapmanMachen:2017} the polynomial $p(x)$ is not given a name.}. The left eigenvalues of $C$ are exactly the roots of $p(x)$, and for every root $\lambda$, a corresponding eigenvector is \[v=\begin{bmatrix}
	1 \\ \lambda \\ \lambda^2 \\ \vdots \\ \lambda^{n-1}
\end{bmatrix}.\] 
Moreover, every left eigenvalue is also a right eigenvalue, and every right eigenvalue of $C$ is in the conjugacy class of some left eigenvalue of $C$. 
\end{thm}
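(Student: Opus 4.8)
The plan is to prove all three assertions by a single direct computation with the block structure of the companion matrix $C$, expanding $Cv$ row by row and keeping careful track of whether scalars multiply on the left or on the right.

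First I would settle the characterization of the left eigenvalues. Writing $v=(v_1,\dots,v_n)^{T}\in\mathcal{D}^n$ and expanding $Cv=\lambda v$, the top $n-1$ rows (the identity block) force $v_{i+1}=\lambda v_i$, hence $v_i=\lambda^{i-1}v_1$ by induction; moreover $v_1\neq 0$, since otherwise $v=0$. Because $C(vc)=(Cv)c$ for any $c\in\mathcal{D}$, the equation $Cv=\lambda v$ is stable under right multiplication of $v$ by $v_1^{-1}$, so we may normalize to $v=(1,\lambda,\dots,\lambda^{n-1})^{T}$. The last row then reads $-(c_0+c_1\lambda+\cdots+c_{n-1}\lambda^{n-1})=\lambda\cdot\lambda^{n-1}$, which is precisely $p(\lambda)=0$; running the computation backwards shows that this $v$ is an eigenvector whenever $p(\lambda)=0$. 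This gives that the left eigenvalues of $C$ are exactly the roots of $p(x)$, realized by the stated eigenvector.

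For the second claim I would note that the entries of $v=(1,\lambda,\dots,\lambda^{n-1})^{T}$ are powers of $\lambda$ and therefore commute with $\lambda$, so $\lambda v=v\lambda$; hence $Cv=\lambda v=v\lambda$ exhibits $\lambda$ as a right eigenvalue with the very same eigenvector. For the third claim, let $\mu$ be a right eigenvalue, say $Cw=w\mu$ with $w\neq 0$. The same row-by-row expansion gives $w_{i+1}=w_i\mu$, so $w_i=w_1\mu^{i-1}$ with $q:=w_1\neq 0$, and the last row yields $q\mu^{n}+c_{n-1}q\mu^{n-1}+\cdots+c_1q\mu+c_0q=0$. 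Using the identity $q\mu^{j}=(q\mu q^{-1})^{j}q$, this rewrites as $p(q\mu q^{-1})\,q=0$, and since $q\neq 0$ in the division algebra $\mathcal{D}$ we conclude $p(q\mu q^{-1})=0$. Thus $q\mu q^{-1}$ is a root of $p(x)$, hence a left eigenvalue of $C$, and $\mu$ lies in its conjugacy class.

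I do not anticipate a real obstacle here: the only points that need care are the side on which scalars act (right‑normalization for left eigenvectors, left‑normalization for right eigenvectors), the absence of zero divisors in $\mathcal{D}$ (used both to force $v_1\neq 0$ and to cancel $q$), and the elementary but decisive observation that the candidate eigenvector is built from powers of $\lambda$ and hence commutes with $\lambda$. One could alternatively route the converse in the first step, and the conjugacy step, through the Gordon--Motzkin factorization $f(x)=g(x)(x-\lambda)$ recalled above, but the direct evaluation is shorter and self‑contained.
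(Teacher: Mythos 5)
Your argument is correct, and all three assertions are verified soundly: the row-by-row expansion of $Cv=\lambda v$ together with right-normalization by $v_1^{-1}$ (legitimate because $C(vc)=(Cv)c$ and $(\lambda v)c=\lambda(vc)$ in an associative $\mathcal{D}$) gives the characterization of left eigenvalues and the explicit eigenvector; the observation $\lambda v=v\lambda$ for $v=(1,\lambda,\dots,\lambda^{n-1})^{T}$ gives the left-to-right implication; and the identity $q\mu^{j}=(q\mu q^{-1})^{j}q$ plus the absence of zero divisors handles the conjugacy statement for right eigenvalues. Note that the paper itself offers no proof of this theorem — it is quoted from Chapman--Machen \cite{ChapmanMachen:2017} as background — so there is no in-paper argument to compare against; your direct computation is the standard, self-contained route (essentially what one expects the cited source to do), and it is consistent with the paper's conventions: coefficients of $p(x)$ act on the left in evaluation, and left/right eigenvalues are defined by $Bv=\lambda v$ and $Bv=v\lambda$ respectively. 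The one caveat worth keeping in mind is that every step (the normalization and the power identity in particular) uses associativity of $\mathcal{D}$, so the argument applies to the associative setting of this theorem but would not transfer verbatim to the octonion case treated later in the paper.
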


As we saw in \S\ref{sec:prelim} the Vandermonde matrix $V_n(a_1,\ldots,a_n)$ is  central to solving linear recurrence relations. If the values of $a_1,\ldots,a_n$ in the Vandermonde matrix are in some division ring $\mathcal{D}$, rather than lying in a field, then 
the values $a_1,\ldots,a_n$ being distinct no longer guarantees nonsingularity of the Vandermonde matrix, as illustrated by the following example over Hamilton's quaternions $\mathbb{H}$:
\[
V_3(i, j, k) = 
\begin{pmatrix}
1 & 1 & 1 \\
i & j & k \\
-1 & -1 & -1
\end{pmatrix}.
\]

Lam~\cite{lam1986} proved the following sufficient condition for the Vandermonde matrix to be nonsingular. 

\begin{thm}[Lam] \label{thm:lam}
	Let $a_1,\ldots,a_n$ be distinct elements in a division ring $\mathcal{D}$. If no three of $a_1,\ldots,a_n$ lie in one conjugacy class of $\mathcal{D}$, then the Vandermonde matrix $V_n(a_1,\ldots,a_n)$ is nonsingular. 
\end{thm}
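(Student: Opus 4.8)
The plan is to translate the nonsingularity of $V=V_n(a_1,\dots,a_n)$ into a statement about the roots of a single polynomial, and then attack that statement through the factorization theory of $\mathcal{D}[x]$. Over a division ring a square matrix is invertible precisely when its rows are left linearly independent, so $V$ is singular if and only if there is a nonzero row vector $(c_0,\dots,c_{n-1})$ with $\sum_{j=0}^{n-1} c_j a_i^{\,j}=0$ for every $i$. Writing $f(x)=\sum_{j=0}^{n-1} c_j x^{j}\in\mathcal{D}[x]$, this says exactly that the nonzero polynomial $f$, of degree at most $n-1$, vanishes at each of $a_1,\dots,a_n$ (and since a leading coefficient is a unit, one may take $f$ monic). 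Thus the theorem is equivalent to the assertion that, when no three of the $a_i$ are conjugate, no nonzero polynomial of degree $\le n-1$ has all of $a_1,\dots,a_n$ as roots. It is convenient to introduce, for a finite set $S\subseteq\mathcal{D}$, the quantity $\mathrm{rk}(S)$, the least degree of a monic polynomial vanishing on $S$. A standard build-up, peeling off one linear factor at a time via Gordon--Motzkin and the product formula for evaluation, namely $(g\cdot(x-c))(b)=g\!\left((b-c)b(b-c)^{-1}\right)(b-c)$ for $b\ne c$, shows $\mathrm{rk}(S)\le |S|$; the reduction above then says $V$ is nonsingular if and only if $\mathrm{rk}(\{a_1,\dots,a_n\})=n$.

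Next I would isolate two facts. First, a within-class bound: a single element has $\mathrm{rk}=1$, and two distinct elements $a\ne b$ have $\mathrm{rk}(\{a,b\})=2$, since a linear polynomial $x-c$ cannot vanish at two distinct points. Second, additivity across conjugacy classes: if $S$ meets the distinct conjugacy classes $\Delta_1,\dots,\Delta_r$, then $\mathrm{rk}(S)=\sum_{\ell=1}^{r}\mathrm{rk}(S\cap\Delta_\ell)$. Granting these, the hypothesis that no three of the $a_i$ are conjugate means each class meets $S=\{a_1,\dots,a_n\}$ in at most two points, so each $\mathrm{rk}(S\cap\Delta_\ell)$ equals $|S\cap\Delta_\ell|$ by the within-class bound, and additivity gives $\mathrm{rk}(S)=\sum_\ell |S\cap\Delta_\ell|=n$, which is exactly the nonsingularity sought. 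The small cases $n=1,2$ are checked directly.

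The hard part is the additivity statement, and this is where the genuine noncommutative content lives. The natural route is through the minimal polynomial $f_S$ realizing $\mathrm{rk}(S)$: one shows that $f_S$ is a \emph{fully reducible} (Wedderburn) polynomial, i.e.\ that it splits in $\mathcal{D}[x]$ into linear factors whose roots all lie in the conjugacy classes meeting $S$, and that $f_S$ is the least common right multiple of the $f_{S\cap\Delta_\ell}$. Additivity of degrees then reduces to showing that the minimal polynomials attached to disjoint families of conjugacy classes are right coprime, for in the Euclidean domain $\mathcal{D}[x]$ the least common right multiple of right-coprime polynomials has degree equal to the sum of their degrees. The delicate point — and the reason a naive argument fails — is that over a general division ring a nonconstant polynomial need not possess any root, so right coprimality cannot simply be read off from "no common root." Instead one must use that a right divisor of a fully reducible polynomial is again fully reducible with roots confined to the same classes, whence any common right divisor of $f_{S\cap\Delta_\ell}$ and $f_{S\cap\Delta_m}$ with $\ell\ne m$ must be a unit. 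I would therefore develop, or invoke from the theory of Lam and Leroy, the correspondence between fully reducible polynomials and their root sets, under which disjoint unions of conjugacy-class data correspond to right-coprime factors with additive degrees; combined with the within-class bound and the reduction of the first paragraph, this yields the theorem.
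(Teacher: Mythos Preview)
The paper does not contain a proof of this theorem. It is quoted as a result of Lam and attributed to \cite{lam1986}; the authors use it as a black box and move on. There is therefore nothing in the paper to compare your proposal against.

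That said, your outline is essentially Lam's own argument. The reduction of nonsingularity of $V_n(a_1,\dots,a_n)$ to the statement that the minimal-degree polynomial vanishing on $\{a_1,\dots,a_n\}$ has degree exactly $n$ is Lam's notion of $P$-independence, and the two ingredients you isolate---the within-class bound (any two distinct conjugate elements have rank $2$) and additivity of rank across distinct conjugacy classes---are precisely the lemmas Lam establishes. Your identification of the delicate step (right-coprimality of the minimal polynomials attached to disjoint conjugacy classes, handled via the closure of fully reducible polynomials under right divisors) is accurate, and the Lam--Leroy machinery you invoke is the standard way to make that step rigorous. One small wording issue: you write ``its rows are left linearly independent,'' but the vanishing condition $\sum_j c_j a_i^{\,j}=0$ for all $i$ is a left dependence among the \emph{columns} of $V$ (equivalently, a right dependence among the rows of $V^t$); this does not affect the argument, since over a division ring left and right column/row ranks all coincide, but it is worth stating cleanly.
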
 

\begin{cor}
	For any two distinct elements $a,b$ in a division ring $\mathcal{D}$ the Vandermonde matrix $V_2(a,b)$ is invertible. 
\end{cor}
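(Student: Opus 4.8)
The plan is to derive this immediately from Lam's theorem (Theorem~\ref{thm:lam}). Take $n=2$ and the two distinct elements $a_1 = a$, $a_2 = b$. The hypothesis of Theorem~\ref{thm:lam} requires that no three of the given elements lie in a single conjugacy class of $\mathcal{D}$; since there are only two elements in total, this condition holds vacuously. Hence $V_2(a,b)$ is nonsingular, and since a square matrix over a division ring is invertible precisely when it is nonsingular, the corollary follows.

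I would also note that the statement admits a quick self-contained proof that does not appeal to the full strength of Theorem~\ref{thm:lam}, which is worth including for concreteness. Writing $V_2(a,b) = \left(\begin{smallmatrix} 1 & 1 \\ a & b \end{smallmatrix}\right)$, one performs elementary row operations over $\mathcal{D}$: left-multiplying by $\left(\begin{smallmatrix} 1 & 0 \\ -a & 1 \end{smallmatrix}\right)$ yields $\left(\begin{smallmatrix} 1 & 1 \\ 0 & b-a \end{smallmatrix}\right)$, and since $a \neq b$ the element $b-a$ is invertible in $\mathcal{D}$, so one can further clear the matrix to the identity. Tracking the product of the elementary matrices gives the explicit inverse
\[
V_2(a,b)^{-1} = \begin{bmatrix} 1 & -(b-a)^{-1} \\ 0 & (b-a)^{-1} \end{bmatrix}\begin{bmatrix} 1 & 0 \\ -a & 1 \end{bmatrix} = \begin{bmatrix} 1 + (b-a)^{-1}a & -(b-a)^{-1} \\ -(b-a)^{-1}a & (b-a)^{-1} \end{bmatrix}.
\]
Equivalently, if $c(1,a)^{\mathsf T} + d(1,b)^{\mathsf T} = 0$ then $c = -d$ and $c(a-b) = 0$, forcing $c = d = 0$, so the columns are left linearly independent.

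There is no real obstacle here: this is a degenerate instance of Theorem~\ref{thm:lam}, and the only point requiring any care is the noncommutativity, namely keeping the scalars on the correct side during the elementary operations and invoking that $b-a \ne 0$ implies $b-a$ is a unit (which is exactly the division ring hypothesis). I would present the one-line deduction from Theorem~\ref{thm:lam} as the proof, optionally appending the explicit inverse above as a remark for later use in the order-$2$ cases of Section~\ref{sec:octonions}.
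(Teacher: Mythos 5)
Your one-line deduction from Theorem~\ref{thm:lam} (the ``no three in a conjugacy class'' hypothesis is vacuous when $n=2$), combined with the remark that nonsingular equals invertible, is exactly how the paper obtains this corollary, and your explicit inverse is correct. One caution on your optional remark: over a noncommutative division ring, invertibility of $V$ corresponds to \emph{right} linear independence of its columns (or left independence of its rows), so the check $c(1,a)^{\mathsf T}+d(1,b)^{\mathsf T}=0$ with scalars on the left actually tests invertibility of $V^{\mathsf T}$, which is not equivalent in general; the same one-line argument should instead be run on $(1,a)^{\mathsf T}c+(1,b)^{\mathsf T}d=0$, giving $(a-b)c=0$ and hence $c=d=0$.
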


We remark (as is well-known) that as in linear algebra over a field, a matrix over a division algebra is nonsingular if and only if it is invertible (see e.g.\ \cite[Theorem 3.13.2]{ehrlich2013}). 

Diagonalization (where possible) of a matrix $A\in M_n(\mathcal{D})$ is performed using \emph{right} eigenvalues, as can easily be seen by the matrix relation $AU=UD$, where $D \in M_n(\mathcal{D})$ is a diagonal matrix and $U$ is a transformation matrix. Combining Theorems~\ref{thm:cm} and \ref{thm:lam}, we see that a companion matrix can be diagonalized using the Vandermonde matrix of $n$ distinct left eigenvalues (or equiv.\ roots of $p(x)$) when no three lie in the same conjugacy class.

We now come to the question of Jordan normal form of matrices over a division algebra. The following is a consequence of a theorem of Wedderburn (see L.\ Rowen's excellent article \cite{rowen:1990}):
\begin{thm}\label{thm:wedderburn-jordan}
	Let $A\in M_n(\mathcal{D})$ where $\mathcal{D}$ is a central division algebra,  let $F$ be the center of $\mathcal{D}$ and let $f(x)$ be the minimal polynomial of $A$ over the field $F$. Then the following are equivalent:
	\begin{enumerate}
		\item Each irreducible factor of $f$ in $F[\lambda]$ has a root in $\mathcal{D}$;
		\item $f$ factors as a product of linear factors in $\mathcal{D}[\lambda]$;
		\item $A$ is triangularizable over $\mathcal{D}$.
	\end{enumerate}
In case the characteristic of $\mathcal{D}$ in the theorem is $0$, we can add a fourth equivalent statement:
\begin{enumerate}
\setcounter{enumi}{3}
	\item $A$ is conjugate (similar) in $M_n(\mathcal{D})$ to a matrix in Jordan normal form. 
\end{enumerate}
\end{thm}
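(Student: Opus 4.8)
The plan is to treat Wedderburn's factorization theorem as the engine driving the entire equivalence, in the form of the following dichotomy: if $p(x)\in F[x]$ is monic irreducible and has a root $a\in\mathcal{D}$, then $p$ splits completely in $\mathcal{D}[x]$ as $p(x)=(x-a_m)\cdots(x-a_1)$ with each $a_j$ conjugate to $a$; consequently an irreducible $p\in F[x]$ either has no root in $\mathcal{D}$ at all, or splits completely over $\mathcal{D}$. I would write $f=\prod_{i=1}^{r}p_i^{e_i}$ with the $p_i\in F[x]$ distinct, monic, irreducible. Since the $p_i$ have central coefficients, this is simultaneously a factorization in $F[x]$ and in $\mathcal{D}[x]$, and the factors commute with one another.

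For $(1)\Rightarrow(2)$, if each $p_i$ has a root in $\mathcal{D}$, Wedderburn splits each $p_i$ into linear factors over $\mathcal{D}$, and multiplying the splittings of the commuting powers $p_i^{e_i}$ exhibits $f$ as a product of linear factors. For $(2)\Rightarrow(1)$, a complete factorization $f=(x-\mu_n)\cdots(x-\mu_1)$ over $\mathcal{D}$ has, by the Gordon--Motzkin criterion \cite{gordon-motzkin1965}, a rightmost factor giving a root $\mu_1$ of $f$; the minimal polynomial of $\mu_1$ over $F$ is irreducible and divides $f$, hence equals some $p_i$, which therefore acquires a root and splits. The work here is to force \emph{every} $p_i$, not just one, to acquire a root: I would peel off the rightmost linear factor, reduce modulo the two-sided ideal generated by the irreducible factors already accounted for, and induct on $\deg f$, invoking the Wedderburn dichotomy at each stage.

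For the bridge to triangularizability I would argue both implications by induction on $n$ using $A$-invariant right subspaces. For $(3)\Rightarrow(1)$, conjugating by the triangularizing $U$ preserves the minimal polynomial over $F$, so I may assume $A=T$ is upper triangular with diagonal entries $\lambda_1,\ldots,\lambda_n\in\mathcal{D}$. With the standard flag $W_i=e_1\mathcal{D}+\cdots+e_i\mathcal{D}$, the operator $T$ acts on each quotient $W_j/W_{j-1}$ by right multiplication by $\lambda_j$, so $m_{\lambda_j}(T)\,W_j\subseteq W_{j-1}$, where $m_{\lambda_j}\in F[x]$ is the minimal polynomial of $\lambda_j$; because these central-coefficient polynomials give pairwise commuting matrices whose composite sends $W_n\to W_{n-1}\to\cdots\to W_0=0$, their product annihilates $T$. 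Hence $f$ divides $\prod_j m_{\lambda_j}$, so every irreducible factor of $f$ equals some $m_{\lambda_j}$ and has the root $\lambda_j\in\mathcal{D}$, which is $(1)$. Conversely, for $(1)\Rightarrow(3)$, the inductive step requires a nonzero $v$ with $Av=v\lambda$ for some $\lambda\in\mathcal{D}$ (a right eigenvector); granting such $v$, I split off $v\mathcal{D}$, pass to $\mathcal{D}^n/v\mathcal{D}$ whose induced operator has minimal polynomial dividing $f$, and recurse to assemble a complete invariant flag, i.e.\ a triangular form.

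Finally, for statement $(4)$ in characteristic $0$, I would invoke the characteristic-$0$ addendum to Wedderburn's theorem as developed by Rowen \cite{rowen:1990}: separability (automatic in characteristic $0$) makes each primary component $F[x]/(p_i^{e_i})$ well behaved, so the triangular form can be refined --- by grouping the diagonal entries into conjugacy classes and organizing the corresponding generalized eigenspaces --- into a genuine block-diagonal Jordan normal form, while conversely a Jordan form is triangular. The main obstacle I anticipate is precisely the implication $(1)/(2)\Rightarrow(3)$, namely the existence of right eigenvectors. Over a noncommutative $\mathcal{D}$ the left-coefficient evaluation map $\mathcal{D}[x]\to M_n(\mathcal{D})$, $x\mapsto A$, fails to be a ring homomorphism, since $A$ does not commute with the scalar matrix $\lambda I$ when $\lambda\notin F$; one therefore cannot read eigenvalues and eigenvectors off a factorization of the central polynomial $f$ in the naive commutative fashion, as one can for companion matrices in Theorem~\ref{thm:cm}. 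Producing honest eigenvectors, and keeping track that eigenvalues are pinned down only up to conjugacy, is the delicate point, and is exactly where the structural input of Wedderburn's theorem, rather than elementary polynomial manipulation, becomes indispensable.
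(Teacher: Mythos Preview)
The paper does not prove this theorem at all: it is stated as a known result, with the equivalence of (1)--(3) attributed to Wedderburn via Rowen~\cite{rowen:1990}, and statement (4) attributed to a more general result of \v{D}okovi\'c~\cite{dokovic:1985}. There is thus no proof in the paper to compare against; your proposal is an attempt to supply what the paper deliberately outsources to the literature.

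That said, your sketch is largely sound where it is complete. The Wedderburn dichotomy cleanly handles $(1)\Leftrightarrow(2)$, and your flag argument for $(3)\Rightarrow(1)$ is correct (the key point being that the $m_{\lambda_j}(T)$ commute since their coefficients are central, so the composite $m_{\lambda_1}(T)\cdots m_{\lambda_n}(T)$ collapses the standard flag). The genuine gap, which you explicitly acknowledge, is $(1)\Rightarrow(3)$: you need a right eigenvector to start the induction, and you stop short of producing one. One standard route is to pick an irreducible factor $p$ of $f$ with a root $\lambda\in\mathcal{D}$, so that $F[\lambda]\cong F[x]/(p)$ embeds in $\mathcal{D}$; then $\ker p(A)\neq 0$ is a right $\mathcal{D}$-subspace on which $A$ satisfies $p$, and viewing this kernel as a module over $F[A]/(p(A))\cong F[\lambda]$, one extracts a one-dimensional $A$-invariant right $\mathcal{D}$-subspace by comparing the left $F[\lambda]$-action with the right $\mathcal{D}$-action. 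This is essentially what Rowen does, and it is not a trivial step --- so your instinct that this is where the real structural input lies is correct. Your argument for $(2)\Rightarrow(1)$ via peeling off linear factors and inducting is also somewhat informal; a cleaner route is to note that for a central polynomial $f$ with a given irreducible factor $p_i\in F[x]$, one can write $f=p_i\cdot g$ in $F[x]$ and argue that if $f$ splits in $\mathcal{D}[x]$ then $p_i$ must acquire a root (e.g.\ by degree considerations in the unique-length factorization theory of $\mathcal{D}[x]$).
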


The last statement in the theorem follows from a more general result in \cite{dokovic:1985} (other sufficient conditions for the existence of the Jordan normal form have  been proved earlier by Cohn~\cite{cohn:1973}).

As can easily be seen, any element $\lambda$ on the diagonal of either the triangular form or Jordan normal form of a matrix in Theorem~\ref{thm:wedderburn-jordan} must be a right eigenvalue of $A$. Moreover, it can be replaced by any element in the conjugacy class of $\lambda$. In particular, for a companion matrix $A$ we can replace $\lambda$, using Theorem~\ref{thm:cm}, by a left eigenvalue in the conjugacy class of $\lambda$, and we choose the same left eigenvalue for all the eigenvalues $\lambda$ in the same conjugacy class. 

We have one final piece required regarding the Jordan normal form of a companion matrix defined over a central division ring. As in the field case, every left eigenvalue $\lambda$ of a companion matrix corresponds to a unique Jordan block. In fact, it is shown in \cite{ChapmanMachen:2017} that any corresponding eigenvector of $\lambda$ is of the form $vc$ where  
\[v=\begin{bmatrix}
	1 \\ \lambda \\ \lambda^2 \\ \vdots \\ \lambda^{n-1}
\end{bmatrix};\]
therefore $A-\lambda{I}$ has a one-dimensional null space, and this is only possible if there is a unique Jordan block for $\lambda$.

\section{Recurrence relations over division algebras} \label{sec:division}

Let $\mathcal{D}$ be a division algebra,
and take a left linear recurrence with coefficients in $\mathcal{D}$. Denote by $C$ and $p(x)$ its companion matrix and primitive characteristic polynomial, respectively. Any root $\lambda$ of $p(x)$ provides a solution $\lambda^k$ to the left recurrence \eqref{eq:linear-recurrence} by Theorem~\ref{thm:cm}. Note however, that unlike the commutative case (see \eqref{eq:superposition}), superposition is valid only when the coefficients appear on the right. That is, given roots $\lambda_1,\ldots,\lambda_t$ of $p(x)$ we get that 
\begin{equation} \label{eq:right-superposition}
	a_k = \lambda_1^k\alpha_1+\cdots+\lambda_t^k\alpha_t
\end{equation}
is a solution for the linear recurrence (without initial conditions) for any $\alpha_1,\ldots,\alpha_t\in \mathcal{D}$. 

\begin{proof}[Proof of Theorem~\ref{thm:diagonal}]
	Under the assumptions of the theorem and using Lam's result (Theorem~\ref{thm:lam}), we get that $C$ is diagonalizable using the Vandermonde matrix, and the proof follows exactly as in the proof of Proposition~\ref{prop:diagonal-field}.
\end{proof}

\begin{exmpl}\label{ex:diagonal}
	Let $a_0 = 1, a_1 = 1, a_{k+2}=(-1-ij)a_{k}+ia_{k+1}$ be a left recurrence with coefficients in $\mathbb{H}$. The corresponding companion matrix is
	\[ A = \begin{pmatrix}
		0 & 1 \\
		-1-ij & i
	\end{pmatrix}
	\]
	and the primitive characteristic polynomial is $p(x) = x^2-ix+ij+1.$ The polynomial has exactly two distinct roots $\lambda=j, \mu=i+j$. 
	
	Define
	\[ V= \begin{bmatrix}
		1 & 1 \\
		j & i+j
	\end{bmatrix}\]
(this is the Vandermonde matrix of the two eigenvalues $\lambda,\mu$). 
This matrix is invertible, and its inverse is
	\[ V^{-1}= \begin{bmatrix}
		1-ij & i \\
		ij & -i
	\end{bmatrix}.\]
We get $A=VDV^{-1}$, where
\[ D= \begin{bmatrix}
		j & 0 \\
		0 & i+j
	\end{bmatrix}.\]
Now we compute the coefficients in \eqref{eq:right-superposition} using $V^{-1}\begin{bmatrix}a_0 \\ a_1 \end{bmatrix} = \begin{bmatrix}1+i-ij\\ ij-1\end{bmatrix}$.
Thus the solution to the recurrence relation is 
\[ a_k = j^k(1+i-ij)+(i+j)^k(ij-1)\] 
\end{exmpl}

\begin{proof}[Proof of Theorem~\ref{thm:jordan}]
	According to the statement of the theorem, the minimal polynomial in $F[x]$ of the matrix $A$ satisfies the first statement of Theorem~\ref{thm:wedderburn-jordan} and therefore there exists an invertible matrix $U$ such that $A=UJU^{-1}$ where $J$ is in Jordan normal form, with $t$ Jordan blocks associated to $t$ distinct left eigenvalues $\lambda_1,\ldots,\lambda_t$ of sizes $m_1,\ldots,m_t$, respectively (in fact the $t$ left eigenvalues are from distinct conjugacy classes). Now the formula for $a_k$ follows through by computing the power $J^k$ using \eqref{eq:kth-power-jordan-block} and then the product
	\[
	UJ^k\begin{bmatrix}
	b_1 \\ b_2 \\ \vdots \\ b_n
	\end{bmatrix}. \]
\end{proof}

\begin{exmpl} \label{ex:jordan1}
Given the recurrence relation
\begin{equation*} 
a_{n+2} = ka_n + (i+j)a_{n+1},	
\end{equation*}
the companion matrix of the recurrence relation is 
\[A=
\begin{pmatrix}
	0 & 1 \\
	k & i+j
\end{pmatrix}
\]
Its primitive characteristic polynomial is $p(x)=x^2-(i+j)x-k=(x-j)(x-i)$. This polynomial has a unique root $x=i$, 
	which is a left eigenvalue of $C$. Indeed, any root of $p(x)$ must be a root of the companion polynomial $C_p(x)=p(x)\cdot\overline{p(x)}=(x^2+1)^2$ 
		(the companion polynomial over the quaternions is actually the characteristic polynomial, as defined in Section~\ref{sec:octonions}, of the companion matrix $C$). Thus, any root must be in the conjugacy class of $i$; but if there was a second root of $p(x)$ conjugate to $i$ it would mean that the root was spherical, implying $j$ was a root, but it is not. For a general treatment on computing roots of quaternionic polynomials see \cite{janovska-opfer:2010}.
	
We obtain the Jordan form of $A$. An eigenvector of $A$ associated to $i$ is $v = \begin{bmatrix}
	1 \\ i
\end{bmatrix}$. 
A generalized eigenvector $w$ solving $Aw-wi = v$ is $w = \begin{bmatrix}
	-\frac12j \\ \frac12{k}+1
\end{bmatrix}$.

One can check that in fact if we take 
\[ U = 
\begin{pmatrix}
1 & -\frac{j}{2} \\
i & 	1+\frac{k}{2}
\end{pmatrix}, 
\quad U^{-1} = 
\begin{pmatrix}
\frac{3}{4}+\frac{k}{4} & -\frac{i}{4}+\frac{j}{4} \\
-\frac{i}{2}+\frac{j}{2} & \frac{1}{2}-\frac{k}{2}
\end{pmatrix},
\]
we get 
\[U^{-1}AU = \begin{pmatrix}
i & 1 \\
0 & i	
\end{pmatrix}.\]
We compute the $n$th power of $A$:

\begin{align} \label{eq:ex-jordan}
A^n & = U\begin{pmatrix}
i & 1 \\
0 & i	
\end{pmatrix}^nU^{-1}
 = U\begin{pmatrix}
i^n & ni^{n-1} \\
0 & i^n	
\end{pmatrix}U^{-1}.
\end{align}
\end{exmpl}

We set $\begin{bmatrix}
	b_0 \\ b_1
\end{bmatrix} = U^{-1} \begin{bmatrix}
	a_0 \\ a_1
\end{bmatrix}$ (where $a_0,a_1$ are the initial conditions of the linear recurrence). Multiplying through the matrices in \eqref{eq:ex-jordan} we get 
\[a_{n} = i^nb_0 + (-ni-\frac{j}{2})i^nb_1.\]

\section{Recurrence relations over the Octonions} \label{sec:octonions}

In this section, we show how to solve homogeneous left linear recurrences of order $2$ with coefficients in division octonion algebras. We start by recalling the construction of octonion algebras using the Cayley--Dickson doubling process.

Let $F$ be a field and $A$ be an $F$-algebra with involution $\sigma$.
The Cayley--Dickson doubling process requires a choice of an element $\gamma \in F^\times$ and produces an algebra $B=A \oplus A\ell$ with multiplication defined by
$$(q+r\ell)(s+t\ell)=qs+\gamma \sigma(t)r+(tq+r\sigma(s))\ell, \quad \forall q,r,s,t \in A.$$
We denote the algebra $B$ by the pair $(A,\gamma)$. 
The involution $\sigma$ extends to $B$ by $\sigma(q+r\ell)=\sigma(q)-r\ell$, and satisfies the identity $\sigma(xy)=\sigma(y)\sigma(x)$. For example, $(\mathbb{C},-1) = \mathbb{H}$, Hamilton's quaternions, and  $(\mathbb{\mathbb{H}},-1) = \mathbb{O}$, Cayley's octonions. 
(See \cite{Schafer:1954} for a general treatise on Cayley--Dickson algebras). 
More generally, an $8$ dimensional $F$-algebra generated by the Cayley--Dickson process is called an octonion algebra. 

Octonion algebras are non-associative, so that we cannot use the tools from \S\ref{sec:division-prelim}. But in what follows we prove that for order~$2$ left linear recurrences over an octonion division algebra, we can reduce the problem to solving matrix relations over quaternion algebras. The main fact in our favor is that octonion algebras are alternative, i.e.\ any two elements generate a subalgebra of quaternions which is associative (see \cite[p.\ 47]{schafer1966introduction}).

Denote by $O$ an octonion division algebra. We denote its involution $\sigma(\tau)$ by $\bar{\tau}$ for short. Let $p(x)\in O[x]$ be a polynomial, defined in the same way we defined them for division algebras, with $x$ central and coefficients on the left. The (octonion) conjugate polynomial $\overline{p(x)}$ of $p(x)$ is defined by conjugating all the coefficients using the involution $\sigma$. 
We define the companion polynomial $C_p(x)=p(x)\overline{p(x)}$. This polynomial has real coefficients, and is in fact the characteristic polynomial of the companion matrix associated to the polynomial $p(x)$ as defined in Section~\ref{sec:division-prelim} (see \cite{chapman2020}, \cite{chapman2020b} on octonion polynomials, and \cite{gauss} for polynomials over general Cayley--Dickson algebras). 

 Let $a_0 = \tau, a_1 = \nu$ and $a_{n+2}=\alpha{a_n}+\beta{a_{n+1}}$ where $\tau,\nu,\alpha,\beta \in {O}$. The companion matrix of this order~$2$ left linear recurrence is 
\[ A = \begin{pmatrix}
0 & 1 \\
\alpha & \beta 
\end{pmatrix},
\]
with primitive characteristic polynomial $p(x)=x^2-\beta{x}-\alpha$.

As in the classical case,  
\[
\begin{bmatrix}
	a_{n+1} \\
	a_{n+2}
\end{bmatrix} = 
A\begin{bmatrix}
	a_{n} \\
	a_{n+1}
\end{bmatrix}.
\]
Iterating backwards, we get
\[
\begin{bmatrix}
	a_{n} \\
	a_{n+1}
\end{bmatrix} = 
\underbrace{A(A(\cdots (A}_{n\text{ times}}
\begin{bmatrix}
	\tau \\
	\nu
\end{bmatrix})\cdots)).
\]

The coefficients $\alpha,\beta$ generate an associative subalgebra of $O$, as $O$ is alternative, contained in a quaternion subalgebra we denote by $Q$. Write ${O} = Q\oplus Q\ell$ for some element $\ell\in{O}$ of trace $0$. Thus, we can write 
\[ \tau = q+s\ell, \nu = r+t\ell,\]
with $q,s,r,t\in Q$. 

\begin{lem}
Using the notation $Q$ and $\ell$ defined above, 
let $B=\begin{pmatrix}
x & y \\
z & w 
\end{pmatrix}\in M_2(Q)$ and let $v=\begin{bmatrix}
	m \\
	n
\end{bmatrix} \in Q^2.$
Then $B(v\cdot\ell) = \overline{(\bar{B}\bar{v})}\ell$ (where $\bar\square$ denotes quaternion conjugation). \end{lem}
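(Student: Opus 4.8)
The claim is a direct computation with the Cayley--Dickson multiplication rule $(q+r\ell)(s+t\ell)=qs+\gamma\bar t r+(tq+r\bar s)\ell$ (with $\gamma=-1$ for the relevant doubling, though we never need the value), so the proof I would give is: expand both sides entrywise and match. The key observation to isolate first is the scalar identity: for $b,m\in Q$, the product $b\cdot(m\ell)$ in $O$ equals $(m\ell)b'$-type terms, but more usefully, $b(m\ell)=(\overline{\bar b\,\bar m})\,\ell$. Indeed, writing $b=b+0\cdot\ell$ and $m\ell=0+m\ell$, the doubling formula gives $b(m\ell)=(mb)\ell$; and $mb=\overline{\bar b\,\bar m}$ since quaternion conjugation is an anti-automorphism. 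So the one-variable identity $b(m\ell)=\overline{(\bar b\,\bar m)}\,\ell$ holds, and this is really the whole content.

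**Carrying it out.**  First I would state and verify this scalar lemma $b(m\ell)=\overline{(\bar b\,\bar m)}\,\ell$ for $b,m\in Q$ from the doubling formula, noting that it uses only that $Q$ is associative and that $\bar{\phantom{x}}$ reverses products on $Q$. Then, since $v\cdot\ell=\begin{bmatrix} m\ell\\ n\ell\end{bmatrix}$, computing $B(v\ell)$ entrywise gives first coordinate $x(m\ell)+y(n\ell)$; here I must be slightly careful, because $x(m\ell)+y(n\ell)$ is a sum of two elements of $Q\ell$, and addition in $Q\oplus Q\ell$ is componentwise, so this equals $\big(\overline{\bar x\bar m}+\overline{\bar y\bar n}\big)\ell=\overline{\bar x\bar m+\bar y\bar n}\,\ell$. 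But $\bar x\bar m+\bar y\bar n$ is exactly the first coordinate of $\bar B\bar v$, so the first coordinate of $B(v\ell)$ is $\overline{(\bar B\bar v)_1}\,\ell$, and identically for the second coordinate. This is precisely $\overline{(\bar B\bar v)}\,\ell$ read entrywise, which is what the statement means (conjugation of a vector being applied componentwise).

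**The main subtlety.**  The only thing that needs genuine care — and the place I'd slow down rather than wave hands — is \emph{non-associativity}: a priori $x(m\ell)$ need not be computed by first multiplying $x\cdot m$ in $Q$ and then appending $\ell$. But $O$ is alternative, and more to the point $x,m\ell$ lie in the quaternion subalgebra generated by any two elements, so the products $x(m\ell)$ that appear are each individually unambiguous; and the doubling formula handles mixed products $Q\times Q\ell$ correctly regardless. So there is no associativity obstruction in the identity as stated, since every product written involves at most the fixed $\ell$ and elements of $Q$, and such products are governed directly by the Cayley--Dickson rule. I expect the write-up to be four or five lines: cite the doubling formula, derive $b(m\ell)=(mb)\ell=\overline{(\bar b\bar m)}\ell$, sum over the two terms in each matrix row, and read off the result. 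No step here is a real obstacle; the point of the lemma is simply to record this reduction cleanly so that the subsequent decomposition of the recurrence over $O$ into one over $Q$ goes through.
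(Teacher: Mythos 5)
Your proposal is correct and matches the paper's own proof, which is exactly this entrywise computation: expand $B(v\ell)$ to get coordinates like $x(m\ell)+y(n\ell)$ and rewrite each term via the Cayley--Dickson rule as $\overline{\bar x\bar m}\,\ell$, yielding $\overline{(\bar B\bar v)}\,\ell$. Your explicit isolation of the scalar identity $b(m\ell)=(mb)\ell=\overline{(\bar b\bar m)}\,\ell$ and the remark on non-associativity are just slightly more detailed packaging of the same argument.
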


\begin{proof} This is a straightforward computation:
	\[
	\begin{aligned}
		B(v\cdot\ell) &= \begin{pmatrix}
			x & y \\
			z & w 
		\end{pmatrix}\begin{bmatrix}
			m\ell \\
			n\ell
		\end{bmatrix} = \begin{bmatrix}
			x(m\ell)+y(n\ell) \\
			z(m\ell)+w(n\ell)
		\end{bmatrix} = \\
		 &= \overline{\begin{bmatrix}
				\bar{x}\bar{m}+\bar{y}\bar{n} \\
				\bar{z}\bar{m}+\bar{w}\bar{n}
		\end{bmatrix}}\ell = \overline{(\bar{B}\bar{v})}\ell.
	\end{aligned}
	\]
\end{proof}

Now, 
\[ A\begin{bmatrix}
	\tau \\
	\nu
\end{bmatrix} = A\begin{bmatrix}
	q \\
	r
\end{bmatrix} + A\begin{bmatrix}
	s\ell \\
	t\ell
\end{bmatrix}\]
Using the Lemma, we see that 
\[ A\begin{bmatrix}
	\tau \\
	\nu
\end{bmatrix} = A\begin{bmatrix}
	q \\
	r
\end{bmatrix} + \overline{\left(\bar{A}\begin{bmatrix}
	\bar{s} \\
	\bar{t}
\end{bmatrix}\right)} \ell\]

Iterating, we get by induction that

\[A \left( A \left( \cdots A\begin{bmatrix}
	\tau \\
	\nu
\end{bmatrix} \right) \cdots \right) = A^n\begin{bmatrix}
	q \\
	r
\end{bmatrix} + \overline{\left((\bar{A})^n\begin{bmatrix}
	\bar{s} \\
	\bar{t}
\end{bmatrix}\right)} \ell\]

Both $A$ and $\bar{A}$ are companion matrices over $Q$ (this also makes the expression $A^n$ unambiguous). It therefore remains to find their normal forms, as in \S\ref{sec:division}.  A direct computation then gives us the required formula for $a_k$.
\begin{rem}
	Note that, in general, for two matrices $A,B$ with coefficients in a quaternion algebra $Q$ (and other noncommutative rings) we have $\overline{AB} \ne \bar{A}\bar{B}$ and $\overline{AB} \ne \bar{B}\bar{A}$.
\end{rem}

\begin{proof}[Proof of Theorem~\ref{thm:octonion}]
As in the statement of the theorem, let $\lambda_1,\mu_1$ be the distinct roots of the primitive characteristic polynomial $p(x)$. If $\lambda_1,\mu_1$ are in the same conjugacy class, then in fact the roots are spherical, and all elements in the conjugacy class are roots (see \cite{chapman2020, chapman2020b}). Moreover, this means $p(x)$ and the linear recurrence are defined over $F$, and the problem reduces to solving the recurrence over the subalgebra of quaternions generated by $\tau$ and $\nu$. We will therefore assume now that $\lambda_1,\mu_1$ belong to two distinct conjugacy classes.  

Let $\bar{A}$ be the conjugate matrix of $A$ (elementwise octonion conjugation), i.e.
\[ \bar{A} = \begin{pmatrix}
0 & 1 \\
\bar{\alpha} & \bar{\beta} 
\end{pmatrix}.
\]
Clearly $\bar{A}$ is a companion matrix, and its primitive characteristic polynomial is $\overline{p(x)}=x^2-\bar{\beta}{x}-\bar{\alpha}$. Let $C_p(x)=p(x)\overline{p(x)}\in\mathbb{R}[x]$ be the companion polynomial of $p(x)$; it has the property that any root of $p(x)$ is also a root of $C_p(x)$ and any root of $C_p(x)$ is conjugate to some root of $p(x)$ (see \cite{ChapmanMachen:2017}). Therefore $\overline{p(x)}$ has two roots from distinct conjugacy classes just like $p(x)$, which we denote by $\lambda_2,\mu_2$. Now $A$ and $\bar{A}$ are diagonalizable using the Vandermonde matrices $V_i(\lambda_i,\mu_i), i=1,2$, respectively, i.e.,
\begin{equation} \label{eq:octonion-decomp}
	A^n\begin{bmatrix}
	\tau \\
	\nu
\end{bmatrix} = V_1D_1^nV_1^{-1}\begin{bmatrix}
	q \\
	r
\end{bmatrix} + \overline{\left(V_2D_2^nV_2^{-1}\begin{bmatrix}
	\bar{s} \\
	\bar{t}
\end{bmatrix}\right)} \ell,
\end{equation}
where
\[
	D_1 = \begin{bmatrix}
		\lambda_1 & 0 \\
		0 & \mu_1
	\end{bmatrix}, \quad 
	D_2 = \begin{bmatrix}
		\lambda_2 & 0 \\
		0 & \mu_2
	\end{bmatrix}.
\]

Setting
	\begin{equation}
		\begin{bmatrix}
			b_1 \\ b_2 
		\end{bmatrix} = V_1^{-1}	
		\begin{bmatrix}
			q \\ r
		\end{bmatrix}, \quad
		\begin{bmatrix}
			c_1 \\ c_2 
		\end{bmatrix} = V_2^{-1}	
		\begin{bmatrix}
			s \\ t
		\end{bmatrix},
\end{equation}
 we get
 \begin{align*}
 a_n &= \lambda_1^nb_1+\mu_1^nb_2+\overline{\lambda_2^nc_1+\mu_2^nc_2}\ell \\
	 &= \lambda_1^nb_1 + \mu_1^nb_2 + (\bar{c}_1\bar{\lambda}_2^n+\bar{c}_2\bar{\mu}_2^n)\ell 	
 \end{align*}
 
\end{proof}

\begin{exmpl}
	We take a very simple example of a left linear recurrence of order $2$ defined over $\mathbb{O}$: $$a_{k+2}=(-1-ij)a_{k}+ia_{k+1}$$ with initial conditions $a_0 = 1, a_1 = \ell$. We have already seen the companion matrix in Example~\ref{ex:diagonal}:  	
	\[ A = \begin{pmatrix}
		0 & 1 \\
		-1-ij & i
	\end{pmatrix}
	\]
The expression $A \left( A \left( \cdots A\begin{bmatrix}
	1 \\
	\ell
\end{bmatrix} \right) \cdots \right)$ becomes 
\[A^n\begin{bmatrix}
	1 \\
	0
\end{bmatrix} + \overline{\left((\bar{A})^n\begin{bmatrix}
	0 \\
	1
\end{bmatrix}\right)} \ell.\]

We remember from Example~\ref{ex:diagonal} that $A^n = V_1D_1^nV_1^{-1}$ where 
	\[ V_1= \begin{bmatrix}
		1 & 1 \\
		j & i+j
	\end{bmatrix}, \quad 
	D_1= \begin{bmatrix}
		j & 0 \\
		0 & i+j
	\end{bmatrix}, \quad
	V_1^{-1}= \begin{bmatrix}
		1-ij & i \\
		ij & -i
	\end{bmatrix}.\]
Since we only care about the first entry of the resulting vector, we see that the contribution of $A^n\begin{bmatrix}
	1 \\
	0
\end{bmatrix}$ to this entry is $j^n(1-ij) + (i+j)^nij$. The primitive characteristic polynomial of $\bar{A}$ is $\overline{p(x)} = x^2+ix+(1-ij)$. This polynomial has exactly two roots $j$ and $j-i$. So we get $\bar{A}^n=V_2D_2^nV_2^{-1}$ where
 	\[ V_2= \begin{bmatrix}
		1 & 1 \\
		j & j-i
	\end{bmatrix}, \quad 
	D_2= \begin{bmatrix}
		j & 0 \\
		0 & j-i
	\end{bmatrix}, \quad
	V_2^{-1}= \begin{bmatrix}
		1+ij & -i \\
		-ij & i
	\end{bmatrix}.\]
Computing the first entry of $\overline{\left((\bar{A})^n\begin{bmatrix}
	0 \\
	1
\end{bmatrix}\right)} \ell,$ we get 
\[\overline{(-j^ni+(j-i)^ni}\ell = (i(-j)^n - i(i-j)^n)\ell.\]
So the solution to the left linear recurrence is 
\[a_n = j^n(1-ij) + (i+j)^nij + (i(-j)^n - i(i-j)^n)\ell.\]
\end{exmpl}

\begin{proof}[Proof of Theorem~\ref{thm:octonion2}]
	The proof follows the same idea as in the proof of Theorem~\ref{thm:octonion}. One shows that if $p(x)$ has a unique root in $O$ then $\overline{p(x)}$ also has a unique root in $O$. Both quadratic polynomials split over $O$ guaranteeing the existence of jordan forms for $A$ and $\bar{A}$,  
		and the rest proceeds in a straightforward way from the decomposition in \eqref{eq:octonion-decomp}.
\end{proof}

\begin{exmpl}
	We solve the recurrence relation
\begin{equation*} 
a_{n+2} = ka_n + (i+j)a_{n+1},	
\end{equation*}
with initial conditions $a_0 = 1, a_1 = \ell$. As shown in Example \ref{ex:jordan1}, the companion matrix \(A=
\begin{pmatrix}
	0 & 1 \\
	k & i+j
\end{pmatrix}
\) has jordan form \(J_1=\begin{pmatrix}
i & 1 \\
0 & i	
\end{pmatrix}\), with transition matrix \(U_1 = 
\begin{pmatrix}
1 & -\frac{j}{2} \\
i & 	1+\frac{k}{2}
\end{pmatrix}\). The conjugate matrix \(\bar{A} = \begin{pmatrix}
	0 & 1 \\
	-k & -i-j
\end{pmatrix}\) has jordan form \(J_1=\begin{pmatrix}
-j & 1 \\
0 & -j	
\end{pmatrix}\) with transition matrix \(U_2 = \begin{pmatrix}
1 &     \frac{i}{2} \\
-j & 	1-\frac{k}{2}
\end{pmatrix}\). We compute
\[U_1^{-1}\begin{bmatrix}
	1 \\
	0
\end{bmatrix} = \begin{bmatrix}
	\frac34+\frac{k}{4} \\
	-\frac{i}{2}+\frac{j}{2}
\end{bmatrix},\quad U_2^{-1}\begin{bmatrix}
	0 \\
	1
\end{bmatrix} = \begin{bmatrix}
	-\frac{i}{4}+\frac{j}{4} \\
	\frac12+\frac{k}{2}
\end{bmatrix}.\]
Using the same decomposition as in \eqref{eq:octonion-decomp} we get
\tiny
\[
\begin{aligned} 
a_n &= i^n(\frac34+\frac{k}4) + (-ni-\frac{j}2)i^n(-\frac{i}2+\frac{k}2) +
\overline{(-j)^n(-\frac{i}4+\frac{j}4) + (nj+\frac{i}2)(-j)^n(\frac12+\frac{k}2)}\ell \\
&= i^n(\frac34+\frac{k}4) + (-ni-\frac{j}2)i^n(-\frac{i}2+\frac{k}2) + 
((\frac{i}4-\frac{j}4)j^n+(\frac12-\frac{k}2)j^n(-nj-\frac{i}2))\ell
\end{aligned} 
\]
\normalsize
\end{exmpl}

\bibliographystyle{abbrv}
\bibliography{bibfile}
\vspace*{2em}

\end{document}